\documentclass[a4paper,11pt]{amsart}

\usepackage{multicol}
\usepackage{amsmath,latexsym,amsbsy,amssymb}
\usepackage{enumerate}
\usepackage{amsthm}
\usepackage[latin1]{inputenc}

\newtheorem{Theorem}{Theorem}[section]
 
\newtheorem{Proposition}[Theorem]{Proposition}

\newtheorem{Remark}[Theorem]{Remark}

\numberwithin{equation}{section}

\newcommand{\R}{{\mathbb R}}
\newcommand{\N}{{\mathbb N}}

\linespread{1.2}
\setlength{\topmargin}{-0.2in}
\setlength{\oddsidemargin}{0.1in}
\setlength{\evensidemargin}{0.1in}
\setlength{\textwidth}{6.5in}
\setlength{\rightmargin}{0.0in}
\setlength{\leftmargin}{0in}
\setlength{\textheight}{9.5in}


\begin{document}

\title{Variational Analysis for the Bilateral Minimal Time Function}
\author[Luong V. Nguyen]{Luong V. Nguyen }
\address{Institute of Mathematics, Polish Academy of Sciences, ul. \'Sniadeckich 8, 
00-656  Warsaw, Poland}
\email{vnguyen@impan.pl; luonghdu@gmail.com}

\keywords{Bilateral minimal time function, Fr\'echet subdifferential,  singular subdifferential, Normal cone.}

\subjclass[2000]{49J24, 49J52}

\date{\today}
\begin{abstract}
In this paper, we derive formulas for the Fr\'echet (singular) subdiferentials of  the bilateral minimal time function $T:\mathbb{R}^n \times \mathbb{R}^n \to [0,+\infty]$  associated with a system governed by differential inclusions. As a consequence, we  give a connection between the Fr\'echet normals to the sub-level sets of $T$ and to its epigraph. Finally, we  show that the Fr\'echet normal cones to the sub-level set of $T$ at a point $(\alpha,\beta)$ and to epi($T$) at $((\alpha,\beta),T(\alpha,\beta))$ have the same dimension. 
\end{abstract}
\maketitle
\section{Introduction}
Let $F:\mathbb{R}^n  \rightrightarrows \mathbb{R}^n$ be a multifunction. We consider the system governed by the differential inclusion associated with $F$:
\begin{equation}
\label{SDI}
\left\{ \begin{array}{lcl}
    \dot{x}(t) & \in &F(x(t)),\quad\quad \mathrm{a.e.}\, t>0. \\
    x(0)& = & x_0 \in \R^n
  \end{array}\right.
\end{equation}
 A trajectory starting at $x_0$ of $F$ is a solution of the differential inclusion (\ref{SDI}), i.e., an absolutely continuous function $x: [0,+\infty) \to \R^n$ satisfying $\dot{x}(t) \in F(x(t))$ for a.e. $t>0$ and $x(0) = x_0$. 

The bilateral minimal time function  $T:\mathbb{R}^n \times \mathbb{R}^n \to [0,+\infty]$ associated with (\ref{SDI}) is defined as follows: for each pair $(\alpha,\beta) \in \mathbb{R}^n \times \mathbb{R}^n$, $T(\alpha,\beta)$ is the minimal time taken by  the trajectories  of $F$ starting at the point  $\alpha$  to reach the point  $\beta$. If no trajectory starting at $\alpha$ can reach $\beta$, then $T(\alpha,\beta) = +\infty$. When we fix the final point $\beta$, we get the function $T(\cdot,\beta)$ - the well known unilateral minimal time function associated to the target $\{\beta\}$. This function  is a classical and widely studied topic in control theory (see, e.g. \cite{CS95,CSB04,CAK,CMW,GK,CKL,CL,CW1,CW2,NS3,LN,S04,WY98} and references mentioned therein).

The bilateral minimal time function $T$ was introduced in \cite{CN04} by Clarke and Nour to study the Hamilton - Jacobi equation of optimal time problems in a domain containing the target. In that paper, using the function $T$, the authors constructed  proximal solutions of the relevant equation and studied the existence of time-geodesic trajectories. After that, the bilateral minimal time function and its regularity properties were studied deeply in \cite{CN04,CN06,CN13,NS1,NS2}. In these papers, the authors generalized known results for the unilateral minimal time function to the bilateral case.

Inspired by \cite{LN} where a relationship between the \textit{proximal normal cones} to sub-level sets of the unilateral minimal time function and its epigraph is given, in the present paper,  we give a similar relationship between the \textit{Fr\'echet normal cones} to sub-level sets and the epigraph of the bilateral minimal time function. Our main result is presented in Theorem \ref{RE}. By using this result, we prove a special feature of the minimal time function - evidently not true for a general, even convex, function - that is : for $(\alpha,\beta) \in \R^n \times \R^n$ with $0< T(\alpha,\beta) < +\infty$, the Fr\'echet normal cone to the epigraph of $T$ at $((\alpha,\beta),T(\alpha,\beta))$ has the same (algebraic) dimension of the Fr\'echet normal cone to the sub-level set $\{(x,y) \in \R^n\times \R^n: T(x,y) \le T(\alpha,\beta)\}$ at the point $(\alpha,\beta)$.

It is worth mentioning that  the proof of Theorem \ref{RE} relies heavily on the representations of the Fr\'echet (singular) subdifferentials of $T$ (see Theorem \ref{PN}  and Theorem \ref{HPN}) and the following interesting property of normal vectors to the sub-level sets of $T$: for $(\alpha,\beta) \in \mathbb{R}^n \times \mathbb{R}^n$ with $0<T(\alpha,\beta) < +\infty$, if $(\zeta,\theta)$ belongs to the Fr\'echet normal cone of the set  $ \{(x,y)\in \mathbb{R}^n\times \mathbb{R}^n: T(x,y) \le T(\alpha,\beta)\}$ at $(\alpha,\beta)$, then
$$h(\alpha,\zeta) = h(\beta,-\theta),$$
where $h:\mathbb{R}^n \times \mathbb{R}^n \to \mathbb{R}$, the Hamintonian associated to $F$, is defined by
$$h(x,p) = \min_{v\in F(x)} \langle v,p\rangle,\qquad (x,p) \in \mathbb{R}^n \times \mathbb{R}^n.$$ 

The paper is organized as follows. In Section 2, we give some notions, definitions and preliminaries which will be used in the sequel. Section 3 is devoted to variational analysis for the bilateral minimal time function.
\section{Preliminaries}
\subsection{Notations and basic facts}
In this section we recall some basic concepts of nonsmooth analysis. Standard references are in \cite{CLSW,RW}.\\ 
We denote by $||\cdot||$ the Euclidean norm in $\R^n$,  by $\langle\cdot,\cdot\rangle$ the inner product. We also denote by $B(x,r)$ the open ball of radius $r>0$ centered at $x$, and $\mathbb{S}^{n-1}$ the unit sphere in $\R^n$. We will use the shortened $B = B(0,1)$. For any subset $E$ of $\R^n$, we denote by $\mathrm{bdry}E$ its boundary, by $\bar{E}$ its closure and by $\mathrm{Proj_E(x)}$ the projection of $x\in \R^n$ on $E$.  A subset $C$ of $\R^n$ is called a cone if and only if $\lambda x \in C$ for any $x\in C$ and $\lambda \ge 0$. We say that $\kappa \in \N$ is the dimension of a cone $C$ if there exist $v_1,\cdots,v_\kappa \in C$ such that they are linearly independent and for any $v\in C$ there exist nonnegative numbers $\lambda_1,\cdots,\lambda_\kappa$ such that $v = \lambda_1 v_1 +\cdots + \lambda_\kappa v_\kappa$.

 Let $S\subset \R^n$ be a closed set and let $x\in S$. The \textit{Fr\'echet normal cone} to $S$ at $x$, written $\widehat{N}_S(x)$, is the set
$$\widehat{N}_S(x):= \left\{\zeta \in \R^n: \limsup_{S \ni y \to x} \frac{\langle \zeta, y-x\rangle}{||y-x||} \le 0\right\}.$$ 
Elements in $\widehat{N}_S(x)$ are called \textit{Fr\'echet normals} to $S$ at $x$.
 
 In other words, $\zeta \in \widehat{N}_S(x)$ if and only if  for any $\varepsilon >0$, there exists $\delta >0$ such that 
 $$\langle\zeta, y-x\rangle \le \varepsilon ||y-x||,\qquad \forall y\in B(x,\delta).$$

Let $f: \R^n \to \R\cup \{+\infty\}$ be an extended real-valued function. The effective domain of $f$ is the set $\mathrm{dom}(f):= \{x\in \R^n: f(x) < +\infty\}$ and the epigraph of $f$ is the set $\mathrm{epi}(f): =\{ (x,\alpha)\in \R^n\times \R: x\in \mathrm{dom}(f),\alpha \ge f(x)\}$. We say that $f$ is \textit{lower semicontinuous} at $x_0\in \R^n$ if for every $\varepsilon >0$, there exists a neighborhood $V$ of $x_0$ such that $f(x)\ge f(x_0) - \varepsilon$ for all $x\in V$ when $f(x_0) <+\infty$ and $f(x)$ tends to $+\infty$ as $x$ tends to $x_0$ when $f(x_0) =+\infty$. Equivalently,
$$\liminf_{x\to x_0} f(x) \ge f(x_0).$$
We say $f$ is lower semicontinuous  if $f$ is lower semicontinuous at every $x_0\in \R^n$. Observe that if $f$ is lower semicontinuous then its sub-level sets are closed.

Let $x\in \mathrm{dom}(f)$. The \textit{Fr\'echet subdifferential} of $f$ at $x$ is the set
$$\widehat{\partial} f(x): = \left\{ \zeta \in \R^n: \liminf_{y\to x} \frac{f(y)-f(x) - \langle \zeta,y-x\rangle}{||y-x||} \ge 0   \right\}.$$
 In other words, $\zeta \in \widehat{\partial} f(x)$ if and only if for any $\varepsilon >0$, there exists $\delta >0$ such that
 $$\langle \zeta,y-x\rangle \le f(y) - f(x) + \varepsilon ||y-x||,\qquad \forall y\in B(x,\delta).$$
 The Fr\'echet subdifferential of $f$ at $x$ can also  be defined as follows:
 $$\widehat{\partial}f(x) = \left\{ \zeta \in \R^n: (\zeta,-1) \in \widehat{N}_{\mathrm{epi}(f)}(x,f(x))\right\}.$$
 Elements in $\widehat{\partial}f(x) $ are called \textit{Fr\'echet subgradients} of $f$ at $x$.\\
 The \textit{Fr\'echet singular subdifferential} of $f$ at $x$ is the set
 $$\widehat{\partial}^\infty f(x): = \left\{ \zeta \in \R^n: (\zeta,0) \in \widehat{N}_{\mathrm{epi}(f)} (x,f(x)) \right\}.$$
 In other words, $\zeta \in \widehat{\partial}^\infty f(x)$ if and only if for any $\varepsilon >0$, there exists $\delta >0$ such that
 $$\langle \zeta, y-x\rangle\le \varepsilon (||y-x|| + |\beta - f(x)|), \qquad\forall  y\in B(x,\delta),\, (y,\beta)\in \mathrm{epi}(f).$$
 Elements in $\widehat{\partial}^\infty f(x)$  are called \textit{Fr\'echet singular subgradients} of $f$ at $x$.
\subsection{The bilateral minimum time function}
Let $F:\R^n \rightrightarrows \R^n$ be a multifunction. In this paper, we require the following assumptions on the multifunction $F$.
\begin{itemize}
\item[(F1)] $F(x)$ is a nonempty compact convex set for all $x\in \R^n$.
\item[(F2)] $F$ is locally Lipschitz, i.e., for any compact set $K$, there exists a constant $L := L(K)$ such that
$$F(x) \subset F(y) + L||y-x||\bar{B},\quad\forall x,y \in K.$$
\item[(F3)] There exist some positive constants $\gamma$ and $c$ such that for all $x\in \R^n$,
$$v\in F(x) \Rightarrow ||v|| \le \gamma ||x|| + c.$$
\end{itemize}
For some $\tau>0$, we consider the differential inclusion
\begin{equation}
\label{DI}
\left\{ \begin{array}{lcl}
    \dot{x}(t) & \in &F(x(t)),\quad\quad \mathrm{a.e.}\, t\in [0,\tau]. \\
    x(0)& = & x_0 \in \R^n
  \end{array}\right.
\end{equation}
A solution of (\ref{DI}) is an absolutely continuous function $x(\cdot)$ defined on $[0,\tau]$ with the initial condition $x(0) = x_0$. We call $x(\cdot)$ a trajectory of $F$ starting at $x_0$. 

Notice that, under our assumptions on $F$, if $x(\cdot)$ is a trajectory of $F$ defined on $[0,\tau]$ then by Gronwall's Lemma, there exists a constant $M>0$ such that $||x(t) -x_0||\le Mt$ for all $t\in [0,\tau]$. In this paper, for simplicity, we fix the constant $M$ for all $\tau>0$ and for all trajectories. The following theorem gives some information regarding $C^1$ trajectories of $F$ which will be useful in the sequel.
\begin{Theorem}\cite{WY98} \label{C1}
Assume (F1)-(F3). Let $E\subset \R^n$ be compact. Then there exists $\tau>0$ such that associated to every $x\in E$ and $v\in F(x)$ is a trajectory $x(\cdot)$ defined on $[0,\tau]$ with $\dot{x}(0) = v$. Moreover, for all $t\in [0,\tau]$, we have $||\dot{x}(t) - v|| \le Kt$, for some constant $K>0$ independent of $x$
\end{Theorem}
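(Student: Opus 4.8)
The plan is to realize the desired trajectory as the solution of an autonomous ordinary differential equation driven by a continuous single-valued selection of $F$ that is pinned to the prescribed velocity $v$ at time $0$. Fix $x\in E$ and $v\in F(x)$, and for $y\in\R^n$ set
\[
p_v(y) := \mathrm{Proj}_{F(y)}(v),
\]
the unique nearest point to $v$ in the nonempty compact convex set $F(y)$, which is well defined by (F1). The key observation is that, since $v\in F(x)$, we have $p_v(x)=v$; hence a solution of the Cauchy problem $\dot{x}(t)=p_v(x(t))$, $x(0)=x$, will automatically satisfy $\dot{x}(0)=v$.

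First I would establish that $y\mapsto p_v(y)$ is continuous. This follows from (F2): being locally Lipschitz, $F$ is in particular continuous for the Hausdorff distance, and the projection of a fixed point onto a varying family of nonempty compact convex sets depends continuously on the set in the Hausdorff metric. The standard argument is that any subsequential limit of $p_v(y_k)$ as $y_k\to y$ lies in $F(y)$ and inherits the minimizing property, hence equals $p_v(y)$ by uniqueness of the projection onto a convex set. Granting this continuity, Peano's theorem yields a local $C^1$ solution $x(\cdot)$; it is a genuine trajectory of $F$ because $\dot{x}(t)=p_v(x(t))\in F(x(t))$, and it is $C^1$ because $t\mapsto p_v(x(t))$ is continuous.

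Next I would make the lifespan uniform in $(x,v)$. Enlarge $E$ to the compact set $E':=E+\bar{B}(0,R)$ for a fixed $R>0$, and put $M:=\gamma\max_{y\in E'}\|y\|+c$, which by (F3) bounds $\|p_v(y)\|$ for every $y\in E'$ and every admissible parameter $v$, since $p_v(y)\in F(y)$. As long as the solution stays in $E'$ we have $\|\dot{x}(t)\|\le M$, hence $\|x(t)-x\|\le Mt$; choosing $\tau:=R/M$ then guarantees $x(t)\in E'$ for all $t\in[0,\tau]$, so the solution cannot leave $E'$ and therefore exists on the whole interval $[0,\tau]$, with $\tau$ depending only on $E$.

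Finally, the quantitative estimate comes essentially for free from the construction. Since $\dot{x}(t)=\mathrm{Proj}_{F(x(t))}(v)$, we have $\|\dot{x}(t)-v\|=d(v,F(x(t)))$, the distance from $v$ to $F(x(t))$. Applying (F2) on $E'$ with Lipschitz constant $L:=L(E')$ gives $F(x)\subset F(x(t))+L\|x(t)-x\|\bar{B}$; as $v\in F(x)$, this forces $d(v,F(x(t)))\le L\|x(t)-x\|\le LMt$. Thus the conclusion holds with $K:=LM$, which depends only on $E$ and not on the individual $x$ or $v$. I expect the only genuinely delicate point to be the continuity—as opposed to mere measurability—of the projection selection $p_v$; everything downstream, namely existence, the uniform lifespan $\tau$, and the linear estimate, then follows from the growth and Lipschitz hypotheses by the elementary bounds above.
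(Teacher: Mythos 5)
The paper itself offers no proof of this theorem --- it is imported verbatim from \cite{WY98} --- but judged on its merits your argument is correct and is essentially the classical proof of this lemma: integrate the continuous selection $y \mapsto \mathrm{Proj}_{F(y)}(v)$ (continuity following from (F1)--(F2) and uniqueness of projections onto convex sets), get existence on a uniform interval from the growth bound (F3), and read off the estimate from $\|\dot{x}(t)-v\| = d\bigl(v,F(x(t))\bigr) \le L\|x(t)-x\| \le LMt$. All the steps you flag as delicate (continuity of the projection selection, the continuation argument keeping the solution in $E'$ on $[0,\tau]$ with $\tau = R/M$, and the independence of $K = LM$ from $x$ and $v$) are handled correctly, so nothing further is needed.
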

The bilateral minimal time function  $T: \R^n \times \R^n \to [0,+\infty]$ is defined as follows: for $(\alpha,\beta) \in \R^n \times \R^n$,
\begin{equation}
\label{BMTF}
T(\alpha,\beta) = \inf\{T\ge 0: \text{there exists some trajectory}\,x(\cdot)\,\,\text{of}\,\, F\,\,\text{with}\,\,x(0) = \alpha\,\,\text{and}\,\,x(T) =\beta \}.
\end{equation}
If there is no trajectory steering $\alpha$ to $\beta$, then $T(\alpha,\beta) =+\infty$. It may happen that, for $\alpha,\beta\in \R^n$, $T(\alpha,\beta) <+\infty$ and $T(\beta,\alpha) = +\infty$. Obviously, $T(\alpha,\alpha) = 0$, for all $\alpha \in \R^n$.\\
We have following properties of the bilateral minimal time function $T$ (see \cite{CN06}):
\begin{itemize}
\item $T$ is lower semicontinuous.
\item If $T(\alpha,\beta) <+\infty$, then the infimum in (\ref{BMTF}) is attained.
\item For all $\alpha,\beta,\gamma\in \R^n$, we have the following triangle inequality
\end{itemize}
$$T(\alpha,\beta) \le T(\alpha,\gamma) + T(\gamma,\beta).$$
For $t>0$, the set $\mathcal{R}(t): = \{(\alpha,\beta) \in \R^n\times \R^n: T(\alpha,\beta) \le t\}$ is called the reachable set at time $t$ and the set
$$\mathcal{R} := \bigcup_{t\ge 0}\mathcal{R}(t) = \{(\alpha,\beta) \in \R^n\times \R^n: T(\alpha,\beta) <+\infty\},$$
is called the reachable set.

\section{Variational analysis for the bilateral minimal time function}
The following theorem presents a formula for the Fr\'echet sudifferential of the bilateral minimal time function at a point $(\alpha,\alpha)\in \R^n \times \R^n$. The formula is similar to the one for the proximal subdifferential given in \cite{CN06} (see Theorem 4.10 (1) in \cite{CN06}).
\begin{Theorem}
We have
\begin{equation}
\label{1}
\widehat{\partial}T(\alpha,\alpha) = \{(\zeta,-\zeta) \in \R^n \times \R^n: h(\alpha,\zeta) \ge -1\},
\end{equation}
for any $\alpha\in \R^n$.
\end{Theorem}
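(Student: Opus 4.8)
The plan is to prove the two inclusions separately, after first recording that $T(\alpha,\alpha)=0$, so that a pair $(\zeta,\theta)\in\R^n\times\R^n$ lies in $\widehat{\partial}T(\alpha,\alpha)$ precisely when
\[
\liminf_{(x,y)\to(\alpha,\alpha)}\frac{T(x,y)-\langle\zeta,x-\alpha\rangle-\langle\theta,y-\alpha\rangle}{\norm{(x-\alpha,y-\alpha)}}\ge 0 .
\]
Throughout I will abbreviate $r=\norm{(x-\alpha,y-\alpha)}$ and $\tau=T(x,y)$, and I will use the standard fact that the displayed $\liminf$ is $\ge 0$ if and only if along every admissible sequence the corresponding $\liminf$ is $\ge 0$.

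For the inclusion ``$\subseteq$'', suppose $(\zeta,\theta)\in\widehat{\partial}T(\alpha,\alpha)$. First I test with the diagonal sequence $(x,y)=(\alpha+sw,\alpha+sw)$, $s\downarrow 0$, for an arbitrary direction $w$. Since $T(\alpha+sw,\alpha+sw)=0$, the quotient equals the constant $-\langle\zeta+\theta,w\rangle/(\sqrt 2\,\norm{w})$; requiring this to be $\ge 0$ for every $w$ (replacing $w$ by $-w$) forces $\zeta+\theta=0$, i.e. $\theta=-\zeta$. Next, to obtain the Hamiltonian inequality, fix $v\in F(\alpha)$ with $v\neq 0$ and invoke Theorem \ref{C1} (with $E=\{\alpha\}$) to get a trajectory $x(\cdot)$ of $F$ with $x(0)=\alpha$ and $\dot x(0)=v$, so that $x(t)-\alpha=vt+o(t)$. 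Testing with the sequence $(x,y)=(\alpha,x(t))$, $t\downarrow 0$, and using $T(\alpha,x(t))\le t$ together with $\theta=-\zeta$, the numerator is at most $t\bigl(1+\langle\zeta,v\rangle\bigr)+o(t)$ while the denominator is $t\norm{v}+o(t)$; the $\liminf\ge 0$ condition then yields $1+\langle\zeta,v\rangle\ge 0$. As this holds for every $v\in F(\alpha)$ (the case $v=0$ being trivial) and $h(\alpha,\zeta)=\min_{v\in F(\alpha)}\langle v,\zeta\rangle$, we conclude $h(\alpha,\zeta)\ge -1$.

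For the inclusion ``$\supseteq$'', assume $\theta=-\zeta$ and $h(\alpha,\zeta)\ge -1$; since with $\theta=-\zeta$ the numerator becomes $T(x,y)-\langle\zeta,x-y\rangle$, I must show $\liminf_{(x,y)\to(\alpha,\alpha)}\bigl(\tau-\langle\zeta,x-y\rangle\bigr)/r\ge 0$. Points with $\tau=+\infty$ contribute $+\infty$ and may be ignored, so assume $\tau<+\infty$ and let $\xi(\cdot)$ on $[0,\tau]$ be an optimal trajectory from $x$ to $y$ (the infimum is attained). Writing $\langle\zeta,y-x\rangle=\int_0^\tau\langle\zeta,\dot\xi(s)\rangle\,ds$ and using $\dot\xi(s)\in F(\xi(s))$ gives the pointwise bound $\langle\zeta,\dot\xi(s)\rangle\ge h(\xi(s),\zeta)$. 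Because (F2) makes $h(\cdot,\zeta)$ Lipschitz (with constant $L\norm{\zeta}$) on a fixed compact neighbourhood of $\alpha$, and $\norm{\xi(s)-\alpha}\le Ms+\norm{x-\alpha}\le Ms+r$, I get $h(\xi(s),\zeta)\ge -1-L\norm{\zeta}(Ms+r)$, hence
\[
\tau-\langle\zeta,x-y\rangle\ \ge\ -L\norm{\zeta}\Bigl(\tfrac{M}{2}\tau^2+r\tau\Bigr).
\]

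The main obstacle is exactly here: $T$ is only lower semicontinuous, so $\tau=T(x,y)$ need not tend to $0$ as $(x,y)\to(\alpha,\alpha)$, and the crude bound above is not by itself of order $o(r)$. I resolve this by a case split on the size of $\tau$ relative to $r$. If $\tau\ge\sqrt2\,\norm{\zeta}\,r$, then since $\norm{x-y}\le\sqrt2\,r$ we have $\langle\zeta,x-y\rangle\le\norm{\zeta}\norm{x-y}\le\tau$, so the numerator is nonnegative outright. If instead $\tau<\sqrt2\,\norm{\zeta}\,r$, I substitute this into the displayed estimate to obtain $\tau-\langle\zeta,x-y\rangle\ge -Cr^2$ for a constant $C$ depending only on $L$, $M$ and $\norm{\zeta}$, whence the quotient is $\ge -Cr\to 0$. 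In either case, for each $\varepsilon>0$ the quotient exceeds $-\varepsilon$ once $r$ is small enough, so the $\liminf$ is $\ge 0$ and $(\zeta,-\zeta)\in\widehat{\partial}T(\alpha,\alpha)$, completing the proof. I expect the only delicate points to be the uniform choice of the Lipschitz constant $L$ and the bound $M$ on the compact neighbourhood swept out by the relevant trajectories, which is legitimate because for small $r$ all such $\xi(s)$ remain in a fixed ball about $\alpha$.
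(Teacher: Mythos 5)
Your proof is correct. Its ingredients are essentially those of the paper: the diagonal test forcing $\theta=-\zeta$, Theorem \ref{C1} to produce $C^1$ trajectories realizing directions in $F(\alpha)$ (you use forward trajectories of $F$ and test at $(\alpha,x(t))$, the paper uses trajectories of $-F$ and tests at $(x(t),\alpha)$; this is immaterial), and an integral estimate along an optimal trajectory via the Lipschitz continuity of $F$ (your Lipschitz bound for $h(\cdot,\zeta)$ is the paper's projection of $\dot{x}_n(\cdot)$ onto $F(\alpha)$ in different clothing). The genuine difference is the logic of the inclusion $\supseteq$: the paper argues by contradiction, and the negated subgradient inequality hands it the bound $0<T(\alpha_n,\beta_n)\le 2\norm{\zeta}\,\norm{(\alpha_n-\alpha,\beta_n-\alpha)}$ for free, after which the quadratic estimate yields $C\le 0$; you argue directly, so you must manufacture that smallness yourself, and your dichotomy $\tau\ge\sqrt{2}\norm{\zeta}\,r$ (numerator nonnegative outright, by Cauchy--Schwarz) versus $\tau<\sqrt{2}\norm{\zeta}\,r$ (numerator $\ge -Cr^2$) does exactly this. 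Your organization has the merit of isolating explicitly how the mere lower semicontinuity of $T$ is circumvented, which the contradiction argument hides. One point of care: your displayed estimate is asserted for every finite $\tau$, but the uniform constants $L$ and $M$ are only available while $\xi(\cdot)$ stays in a fixed compact set; since the display is invoked only in the regime $\tau<\sqrt{2}\norm{\zeta}\,r$, where (F3) and Gronwall's lemma confine $\xi$ to a fixed ball about $\alpha$ once $r$ is small, this is harmless (and is the same level of informality as the paper's global choice of $M$), but it would be cleaner to derive the display only under that case hypothesis.
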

\begin{proof}
Let $\alpha \in \R^n$ and $(\zeta,\theta) \in \widehat{\partial}T(\alpha,\alpha)$. Then for any $\varepsilon >0$, there exists $\sigma >0$ such that 
\begin{equation}
\label{2}
\langle (\zeta,\theta), (x,y) - (\alpha,\alpha)\rangle - T(x,y) \le \varepsilon ||(x,y) - (\alpha,\alpha)||,
\end{equation}
for all $(x,y) \in B((\alpha,\alpha),\sigma)$.

Taking $x = y$ in (\ref{2}), we have, for all $x\in B(\alpha,\sigma/2)$, that
\begin{equation}
\label{3}
\langle (\zeta,\theta), (x -\alpha,x-\alpha)\rangle  \le \varepsilon ||(x-\alpha,x-\alpha)||,
\end{equation}
Let now $w\in \R^n$ and set $x_n := \alpha+ w/n$ for $n\in \N^*$. Then there is some $n_0 >0$ such that for all $n\ge n_0$ we have $x_n \in B(\alpha,\sigma/2)$. Thus, in (\ref{3}), taking $x = x_n$ with $n\ge n_0$, one get 
$$\langle (\zeta,\theta),(w,w)\rangle \le \varepsilon ||(w,w)||.$$
Letting $\varepsilon \to 0+$ in the latter inequality, we obtain $\langle (\zeta,\theta),(w,w)\rangle \le 0$ for all $w\in \R^n$. This implies $\zeta = -\theta$.

Let $v\in F(\alpha)$ be such that $\langle v,\zeta\rangle = h(\alpha,\zeta)$. By Theorem \ref{C1}, there is a $C^1$ trajectory $x(\cdot)$ of $-F$ such that $x(0) = \alpha$ and $\dot{x}(0) = -v$. There exists $\delta >0$ such that $x(t) \in B(\alpha,\sigma/2)$ for all $t\in [0,\delta]$.  Observe that $T(x(t),\alpha) \le t$ for all $t\in [0,\delta]$. For $t\in [0,\delta]$, taking $x= x(t), y = \alpha$ in (\ref{2}), we have
$$\langle (\zeta,\theta),(x(t),\alpha) - (\alpha,\alpha) \rangle - t \le \varepsilon ||(x(t),\alpha) - (\alpha,\alpha)||,$$
and then
$$\langle \zeta, z(t) - \alpha\rangle \le t + \varepsilon M t.$$
Dividing both sides of the latter inequality by $t>0$ then letting $t\to 0+$, we get
$$\langle \zeta, \dot{z}(0)\rangle \le 1 + \varepsilon M.$$
Since $\varepsilon >0$ is arbitrary, we have $\langle \zeta, v\rangle = \langle \zeta, -\dot{z}(0)\rangle \ge -1$. That is $h(\alpha,\zeta) \ge -1$.

Now let $(\alpha,\zeta) \in \R^n \times \R^n$ be such that $h(\alpha,\zeta) \ge -1$. We want to show that $(\zeta,-\zeta) \in \widehat{\partial}T(\alpha,\alpha)$. Assume to the contrary that $(\zeta,-\zeta) \not \in \widehat{\partial} T(\alpha,\alpha)$. Then there exist a consant $C>0$ and a sequence $\{(\alpha_n,\beta_n)\}$ such that $(\alpha_n,\beta_n) \to (\alpha,\alpha)$, $(\alpha_n,\beta_n) \ne (\alpha,\alpha)$ and 
\begin{equation}
\label{4}
\langle (\zeta,-\zeta), (\alpha_n-\alpha, \beta_n - \alpha)\rangle - T(\alpha_n,\beta_n) > C ||(\alpha_n-\alpha,\beta_n- \alpha)||,\qquad \forall n.
\end{equation}
It follows from (\ref{4}) that for all $n$
\begin{equation}
\label{01}
0< T_n: = T(\alpha_n,\beta_n) \le 2||\zeta||. ||(\alpha_n-\alpha,\beta_n-\alpha)|| <+\infty.
\end{equation}
Thus, for each $n$, there exists a trajectory $x_n(\cdot)$ of $F$ such that $x_n(0) = \alpha_n$ and $x_n(T_n) = \beta_n$.

We have, for all $n$ and all $t\in [0,T_n]$, that
$$||x_n(t)-\alpha|| \le ||x_n(t) - \alpha_n|| + ||\alpha_n - \alpha|| \le MT_n + ||\alpha_n - \alpha||,$$
and then
$$||x_n(t)-\alpha|| \le  MT_n + ||(\alpha_n - \alpha,\beta_n -  \alpha)||.$$
Let $y_n(t) := \mathrm{Proj}_{F(\alpha)} (\dot{x}_n(t))$ on $[0,T_n]$. By the Lipschitz continuity of $F$, we have, for all $n$, 
$$||y_n(t) - \dot{x}_n(t)|| \le L||x_n(t)  - \alpha||,\quad \forall t\in [0,T_n].$$
 Moreover, since $h(\alpha,\zeta) \ge -1$, we have $\langle \zeta,y_n(t)\rangle \ge -1$ for all $n$ and for all $t\in [0,T_n]$. Then using (\ref{01}),
\begin{eqnarray*}
\langle (\zeta,-\zeta), (\alpha_n-\alpha, \beta_n -  \alpha)\rangle - T(\alpha_n,\beta_n) &=& \langle \zeta,\alpha_n - \beta_n\rangle - T_n\\
&\le& \langle \zeta,\int_0^{T_n} \dot{x_n}(t)dt\rangle  - \int_0^{T_n} \langle \zeta, y_n(t)\rangle dt\\
&\le& L||\zeta|| \int_0 ^{T_n} ||y_n(t) - \dot{x}_n(t)|| dt\\
&\le& L ||\zeta|| (MT_n^2 + ||(\alpha_n-\alpha,\beta_n -  \alpha)|| T_n)\\
&\le& 2L||\zeta||^2 (M||\zeta||+1)||(\alpha_n-\alpha,\beta_n -  \alpha)||^2.
\end{eqnarray*}
Combining with (\ref{4}) we have, for all $n$, that
$$C ||(\alpha_n-\alpha,\beta_n- \alpha)|| < 2L||\zeta||^2 (M||\zeta||+1)||(\alpha_n-\alpha,\beta_n -  \alpha)||^2.$$
Dividing both sides of the latter inequality by $||(\alpha_n-\alpha,\beta_n- \alpha)|| >0$ then letting $n\to\infty$, we obtain $ C\le 0$. This is a contradiction. Therefore $(\zeta,-\zeta)\in \widehat{\partial} T(\alpha,\alpha)$.
\end{proof}
We can also derive a formula for the Fr\'echet subdifferential of the bilateral minimal time function at a point $(\alpha,\beta)\in \R^n\times \R^n$ with $\alpha \ne \beta$. Again, the formula is similar to the one for the proximal subdifferential given in \cite{CN06} (see Theorem 4.10 (2) in \cite{CN06}). Before stating the result, in the next proposition, we present a characterisation of a Fr\'echet normals to sub-level sets of the bilateral minimal time function which will be useful in the sequel
\begin{Proposition}\label{eqH}
Let $(\alpha,\beta)\in \R^n\times \R^n$ be such that $0<r:=T(\alpha,\beta) <\infty$. If $(\zeta,\theta) \in \widehat{N}_{\mathcal{R}(r)}(\alpha,\beta)$ then $h(\alpha,\zeta) = h(\beta,-\theta) \le 0$.
\begin{proof}
Since $(\zeta,\theta)\in  \widehat{N}_{\mathcal{R}(r)}(\alpha,\beta)$, for any $\varepsilon >0$, there exists $\sigma >0$ such that, for all $(x,y)\in\mathcal{R}(r) \cap B((\alpha,\beta),\sigma)$, one has
\begin{equation}
\label{E1}
\langle (\zeta,\theta),(x,y) - (\alpha,\beta)\rangle \le \varepsilon ||(x,y) - (\alpha,\beta)||.
\end{equation}
Let $x(\cdot)$ be a trajectory of $F$ such that $x(0) =\alpha$ and $x(r) = \beta$. Then for all $t\in [0,r]$ sufficiently small, we have $(x(t),\beta) \in \mathcal{R}(r)\cap B((\alpha,\beta),\sigma)$. Hence, by (\ref{E1}), we have, for  $t>0$ sufficiently small, that
$$\langle (\zeta,\theta),(x(t),\beta)-(\alpha,\beta)\rangle \le \varepsilon ||(x(t),\beta)-(\alpha,\beta)||.$$
Thus,
\begin{equation}
\label{E2}
\int_0^t \langle \zeta,\dot{x}(s)\rangle ds \le \varepsilon||x(t) - x(0)||\le  \varepsilon Mt.
\end{equation}
Let $g(\cdot)$ be the  projection of $\dot{x}(\cdot)$ on $F(\alpha)$ restricted to $[0,r]$. Then by the Lipschitz continuity of $F$, 
\begin{equation}
\label{E3}
||\dot{x}(s) - g(s)|| \le L||x(s) - \alpha|| \le LMs,\quad\text{for all}\,\,s\in [0,r].
\end{equation}
Using (\ref{E2}) - (\ref{E3}), we have, for $t\in [0,r]$ sufficiently small, that
\begin{eqnarray}
\label{E4}
h(\alpha,\zeta)t &\le& \int_0^t\langle \zeta,g(s)\rangle ds = \int_0^t \langle \zeta,\dot{x}(s)\rangle ds + \int_0^t \langle \zeta, g(s) - \dot{x}(s)\rangle ds \nonumber\\
&\le& \varepsilon Mt + ||\zeta||\int_0^t ||g(s)-\dot{x}(s)||ds \le \varepsilon Mt + LM||\zeta||t^2.
\end{eqnarray}
Dividing (\ref{E4}) by $t>0$ then letting $t\to 0+$, we get $h(\alpha,\zeta) \le \varepsilon M$. Since $\varepsilon >0$ is arbitrary, we conclude that $h(\alpha,\zeta) \le 0$. 

Let $p(\cdot)$ be the projection of $\dot{x}(\cdot)$ on $F(\beta)$ restricted to $[0,r]$. We have
\begin{equation} 
\label{E5}
||\dot{x}(s)-p(s)|| \le L||x(s) - \beta|| \le ML(r-s),\quad\forall s\in [0,r].
\end{equation}

Now let $w\in F(\alpha)$ be such that
$$\langle w,\zeta\rangle = h(\alpha,\zeta) = \min_{v\in F(\alpha)}\langle v,\zeta\rangle.$$
By Theorem \ref{C1}, there exist $\tau>0$ and a $C^1$ trajectory $z(\cdot)$ of $-F$ on $[0,\tau]$ with $z(0) = \alpha$ and $\dot{z}(0) = -w$. For $t\in [0,\tau]$, set $q(t) = z(\tau-t)$. Then $q(\cdot)$ is a trajectory of $F$ with $q(0) = z(\tau)$ and $q(\tau) = \alpha$. By the principle of optimality, we have, for all $t\in [0,\tau]$, that
$$T(z(t),\alpha) = T(q(\tau-t),\alpha)  = T(q(\tau-t),q(\tau)) \le t.$$
Fixed $0<t<\min\{r,\tau\}$. By the triangle inequality, we have $(z(t),x(r-t))\in \mathcal{R}(r)$. We may choose $\tau>0$ sufficiently small such that $(z(t),x(r-t))\in \mathcal{R}(r) \cap B((\alpha,\beta),\sigma)$. It follows from (\ref{E1}) that
$$\langle (\zeta,\theta), (z(t),x(r-t)) - (\alpha,\beta)\rangle \le \varepsilon ||(z(t),x(r-t)) - (\alpha,\beta)||.$$
and then
\begin{equation}
\label{E6}
\langle \zeta,z(t) - z(0)\rangle + \int_{r-t}^r \langle -\theta,\dot{x}(s)\rangle ds \le 2\varepsilon Mt.
\end{equation}
From (\ref{E5}) and (\ref{E6}), for $0<t<\min\{r,\tau\}$, one has
\begin{eqnarray} \label{E7}
\langle \zeta, z(t) - z(0) \rangle + h(\beta,-\theta)t &\le& \langle \zeta, z(t) - z(0) \rangle + \int_{r-t}^r \langle -\theta,\dot{x}(s)\rangle ds + \int_{r-t}^r \langle -\theta,p(s) -\dot{x}(s)\rangle ds\nonumber\\
&\le& 2\varepsilon Mt +||\theta||\int_{r-t}^r||p(s) -\dot{x}(s)||ds \le 2\varepsilon Mt + ML||\theta||t^2.
\end{eqnarray}
Dividing (\ref{E7}) by $t>0$ then letting $t\to 0+$, we obtain
$$-h(\alpha,\zeta) +h(\beta,-\theta)= \langle \zeta, -w\rangle + h(\beta,-\theta)\le 2\varepsilon M.$$
Letting $\varepsilon \to 0+$ in the latter inequality, we get $-h(\alpha,\zeta) +h(\beta,-\theta) \le 0$, i.e., $h(\beta,-\theta) \le h(\alpha,\zeta)$.

Similarly, one can show that $h(\alpha,\zeta) \le h(\beta,-\theta).$ Thus
$$h(\alpha,\zeta) = h(\beta,-\theta) \le 0.$$
The proof is complete.
\end{proof}
\end{Proposition}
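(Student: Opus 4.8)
The plan is to prove the chain in three stages: first the one-sided bound $h(\alpha,\zeta)\le 0$, then $h(\beta,-\theta)\le h(\alpha,\zeta)$, and finally the reverse inequality $h(\alpha,\zeta)\le h(\beta,-\theta)$ by a symmetric construction; together these yield both the equality and the sign. The only tool available is the defining inequality of the Fr\'echet normal cone: for every $\varepsilon>0$ there is $\sigma>0$ with $\langle(\zeta,\theta),(x,y)-(\alpha,\beta)\rangle\le\varepsilon\|(x,y)-(\alpha,\beta)\|$ for all $(x,y)\in\mathcal{R}(r)\cap B((\alpha,\beta),\sigma)$. Everything therefore hinges on producing admissible test points $(x,y)\in\mathcal{R}(r)$ close to $(\alpha,\beta)$ along which the left-hand side reproduces the Hamiltonian at first order in a small parameter $t$.

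For the bound $h(\alpha,\zeta)\le 0$ I would fix a trajectory $x(\cdot)$ of $F$ steering $\alpha$ to $\beta$ in time $r$ and use the test points $(x(t),\beta)$ for small $t>0$. These lie in $\mathcal{R}(r)$ because the tail of $x(\cdot)$ steers $x(t)$ to $\beta$ in time $r-t\le r$. Substituting and integrating gives $\int_0^t\langle\zeta,\dot{x}(s)\rangle\,ds\le\varepsilon Mt$; replacing $\dot{x}(s)$ by its projection onto $F(\alpha)$ and absorbing the error through the Lipschitz estimate $\|\dot{x}(s)-\mathrm{Proj}_{F(\alpha)}(\dot{x}(s))\|\le LMs$ yields $h(\alpha,\zeta)\,t\le\varepsilon Mt+LM\|\zeta\|t^2$, whence $h(\alpha,\zeta)\le 0$ after dividing by $t$, letting $t\to0+$ and then $\varepsilon\to0+$.

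For the equality the decisive idea is to perturb both coordinates simultaneously while staying in $\mathcal{R}(r)$, using the triangle inequality to guarantee admissibility. Let $w\in F(\alpha)$ attain $h(\alpha,\zeta)$. By Theorem \ref{C1} there is a $C^1$ trajectory $z(\cdot)$ of $-F$ with $z(0)=\alpha$ and $\dot{z}(0)=-w$; since the time-reversed curve is a trajectory of $F$ steering $z(t)$ to $\alpha$ in time $t$, we have $T(z(t),\alpha)\le t$, and the triangle inequality gives $T(z(t),x(r-t))\le T(z(t),\alpha)+T(\alpha,x(r-t))\le t+(r-t)=r$, i.e. $(z(t),x(r-t))\in\mathcal{R}(r)$. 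Inserting this test point, estimating the second coordinate via the projection of $\dot{x}(s)$ onto $F(\beta)$ with $\|\dot{x}(s)-\mathrm{Proj}_{F(\beta)}(\dot{x}(s))\|\le ML(r-s)$, and dividing by $t$ before letting $t\to0+$ isolates $-h(\alpha,\zeta)+h(\beta,-\theta)\le 0$, so $h(\beta,-\theta)\le h(\alpha,\zeta)$. The reverse inequality is obtained symmetrically: choose $w'\in F(\beta)$ attaining $h(\beta,-\theta)$, build via Theorem \ref{C1} a forward $C^1$ trajectory $\tilde{z}(\cdot)$ of $F$ with $\tilde{z}(0)=\beta$ and $\dot{\tilde{z}}(0)=w'$, and use the test point $(x(t),\tilde{z}(t))$, which lies in $\mathcal{R}(r)$ since $T(x(t),\tilde{z}(t))\le T(x(t),\beta)+T(\beta,\tilde{z}(t))\le(r-t)+t=r$.

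I expect the main obstacle to be the equality rather than the sign bound. The sign bound perturbs only one coordinate and is essentially a directional-derivative computation. The equality instead requires the glued test points to genuinely lie in $\mathcal{R}(r)$ --- which is exactly where the triangle inequality and the time-reversal trick are needed --- and requires the $C^1$ regularity supplied by Theorem \ref{C1} so that the constructed perturbations carry the prescribed initial velocities and the projection error terms, being $O(t^2)$, drop out after division by $t$. Confirming that the two one-sided estimates combine into an exact equality, rather than two inequalities separated by a gap, is the step that must be handled with care.
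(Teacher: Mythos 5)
Your proposal follows essentially the same route as the paper's proof: the sign bound $h(\alpha,\zeta)\le 0$ via the test points $(x(t),\beta)$ with projection onto $F(\alpha)$, the inequality $h(\beta,-\theta)\le h(\alpha,\zeta)$ via the glued test point $(z(t),x(r-t))$ built from Theorem \ref{C1}, time reversal, and the triangle inequality, and the reverse inequality by the symmetric construction (which the paper abbreviates as ``similarly'' but you correctly spell out with the test point $(x(t),\tilde{z}(t))$). All estimates, including the $O(t^2)$ projection errors and the order of limits $t\to 0+$ then $\varepsilon\to 0+$, match the paper's argument, so the proposal is correct.
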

\begin{Theorem}\label{PN}
Let $(\alpha,\beta)\in \R^n\times \R^n$ be such that $0<r:=T(\alpha,\beta) <\infty$. One has
$$\widehat{\partial} T(\alpha,\beta) = \widehat{N}_{\mathcal{R}(r)} (\alpha,\beta) \cap \{(\zeta,\theta) \in \R^n\times \R^n: h(\alpha,\zeta) = h(\beta,-\theta) = -1\}.$$
\end{Theorem}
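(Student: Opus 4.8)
The plan is to prove the two inclusions separately, and within the inclusion ``$\subseteq$'' to treat the normal-cone membership and the Hamiltonian identities as independent pieces. Take $(\zeta,\theta)\in\widehat\partial T(\alpha,\beta)$. The membership $(\zeta,\theta)\in\widehat{N}_{\mathcal{R}(r)}(\alpha,\beta)$ is immediate, since for $(x,y)\in\mathcal{R}(r)$ one has $T(x,y)-r\le 0$, so the defining subgradient inequality collapses to the defining inequality of the Fr\'echet normal cone. It then suffices to show $h(\alpha,\zeta)=-1$: the identity $h(\beta,-\theta)=-1$ comes for free from Proposition \ref{eqH}, because we already know $(\zeta,\theta)\in\widehat{N}_{\mathcal{R}(r)}(\alpha,\beta)$ and that Proposition gives $h(\beta,-\theta)=h(\alpha,\zeta)$.

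To pin down $h(\alpha,\zeta)$ I would run two one-sided trajectory arguments, in the spirit of the proof of Proposition \ref{eqH} but now testing the subgradient inequality (which carries the extra term $-T(x,y)$) instead of the normal-cone inequality; this extra term is exactly what upgrades the conclusion from ``$\le 0$'' to ``$=-1$''. For $h(\alpha,\zeta)\ge -1$: pick $v\in F(\alpha)$ with $\langle\zeta,v\rangle=h(\alpha,\zeta)$, use Theorem \ref{C1} to get a $C^1$ trajectory $x(\cdot)$ of $-F$ with $x(0)=\alpha$, $\dot x(0)=-v$, note $T(x(t),\beta)\le T(x(t),\alpha)+r\le t+r$, insert $(x(t),\beta)$ into the subgradient inequality, divide by $t>0$ and let $t\to 0^+$ to obtain $\langle\zeta,-v\rangle\le 1$. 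For $h(\alpha,\zeta)\le -1$: take an optimal trajectory $z(\cdot)$ from $\alpha$ to $\beta$, so that $T(z(t),\beta)=r-t$ for small $t$ by the principle of optimality, insert $(z(t),\beta)$, replace $\dot z(s)$ by its projection $g(s)$ onto $F(\alpha)$ (Lipschitz error $O(s)$), use $\langle\zeta,g(s)\rangle\ge h(\alpha,\zeta)$, divide by $t$ and send $t\to 0^+$. Together these give $h(\alpha,\zeta)=-1$.

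For the reverse inclusion I would argue by contradiction, in the manner of the first theorem of this section. Suppose $(\zeta,\theta)\in\widehat{N}_{\mathcal{R}(r)}(\alpha,\beta)$ with $h(\alpha,\zeta)=h(\beta,-\theta)=-1$ but $(\zeta,\theta)\notin\widehat\partial T(\alpha,\beta)$. Then there are $C>0$ and $(\alpha_n,\beta_n)\to(\alpha,\beta)$, $(\alpha_n,\beta_n)\ne(\alpha,\beta)$, with $\langle(\zeta,\theta),(\alpha_n,\beta_n)-(\alpha,\beta)\rangle-(T_n-r)>C d_n$, where $T_n:=T(\alpha_n,\beta_n)$ and $d_n:=\|(\alpha_n,\beta_n)-(\alpha,\beta)\|$; by lower semicontinuity and this very inequality one gets $T_n\to r$. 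The strategy is to replace $(\alpha_n,\beta_n)$ by a nearby point of $\mathcal{R}(r)$ to which the normal-cone inequality applies, paying for the modification with the Hamiltonian. If $T_n\ge r$, I flow the initial point forward by $T_n-r$ along an optimal trajectory from $\alpha_n$ to $\beta_n$, landing at $(\hat\alpha_n,\beta_n)\in\mathcal{R}(r)$; estimating the increment $\langle\zeta,\hat\alpha_n-\alpha_n\rangle$ by projecting the velocity onto $F(\alpha)$ and using $\langle\zeta,\cdot\rangle\ge h(\alpha,\zeta)=-1$ makes the $T_n-r$ terms cancel, leaving $C d_n$ dominated by $\varepsilon$-terms. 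Here the contradiction hypothesis itself already forces $T_n-r=O(d_n)$ (otherwise one contradicts $\langle(\zeta,\theta),\cdot\rangle\le\|(\zeta,\theta)\|d_n$ directly), so the quadratic error terms are negligible and dividing by $d_n$, letting $n\to\infty$ and then $\varepsilon\to0^+$, yields $C\le 0$.

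The delicate case, and the main obstacle, is $T_n<r$: then $(\alpha_n,\beta_n)$ already lies in $\mathcal{R}(r)$ but \emph{strictly inside}, so the normal-cone inequality is by itself too weak. Here I would flow the initial point backward by $\rho_n:=r-T_n$ along a trajectory of $-F$ whose initial velocity realizes $h(\alpha_n,\zeta)$ (Theorem \ref{C1}), producing $(\tilde\alpha_n,\beta_n)\in\mathcal{R}(r)$ for which the gain satisfies $\langle\zeta,\tilde\alpha_n-\alpha_n\rangle\ge -h(\alpha_n,\zeta)\rho_n-O(\rho_n^2)=(1+o(1))\rho_n$. Feeding this into the normal-cone inequality and combining with the contradiction hypothesis gives $C d_n<(1+h(\alpha_n,\zeta))\rho_n+O\big(\varepsilon(\rho_n+d_n)\big)+O(\rho_n^2)$. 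The crux is that here $\rho_n=O(d_n)$ is \emph{not} automatic and must be extracted from this same inequality: since $h(\alpha_n,\zeta)\to h(\alpha,\zeta)=-1<0$, the term $h(\alpha_n,\zeta)\rho_n$ is strongly negative, and comparing with the trivial lower bound $\langle(\zeta,\theta),(\alpha_n,\beta_n)-(\alpha,\beta)\rangle\ge-\|(\zeta,\theta)\|d_n$ forces $\rho_n/d_n$ to stay bounded. Because moreover $1+h(\alpha_n,\zeta)\to 0$, dividing by $d_n$ and letting $n\to\infty$ and then $\varepsilon\to 0^+$ again gives $C\le 0$. I expect this simultaneous exploitation of $h(\alpha,\zeta)=-1$, through both its sign (to bound the time slack $\rho_n$) and its exact value (to annihilate the leading term via $1+h\to0$), to be the subtle point; it is precisely why the sharp normalization $-1$, and not merely $h\le0$, is indispensable. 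By Proposition \ref{eqH} only $h(\alpha,\zeta)=-1$ is actually used, the terminal Hamiltonian playing the symmetric role should one prefer to flow the endpoint instead.
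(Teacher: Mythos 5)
Your proposal is correct and follows essentially the same route as the paper: the forward inclusion is the paper's argument verbatim (restrict the subgradient inequality to $\mathcal{R}(r)$, test it along a backward $C^1$ trajectory from Theorem \ref{C1} and along an optimal trajectory, and invoke Proposition \ref{eqH} for $h(\beta,-\theta)$), and the converse is the same contradiction scheme with the same case split on the sign of $T_n-r$, the same trajectory surgery producing nearby points of $\mathcal{R}(r)$, and the same mechanism of extracting the bound (time slack) $\le Q\,$(distance) from the inequality itself before dividing by the distance. The only deviation is bookkeeping: you flow just the initial point by the full slack (using a velocity realizing $h(\alpha_n,\zeta)$, so that only $h(\alpha,\zeta)=-1$ is actually needed in the converse), whereas the paper flows both endpoints by half the slack each, using both normalizations $h(\alpha,\zeta)=h(\beta,-\theta)=-1$; both versions go through.
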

\begin{proof}
Let $(\zeta,\theta)\in \widehat{\partial} T(\alpha,\beta)$. Then for any $\varepsilon >0$ there exists $\sigma >0$ such that 
\begin{equation}
\label{5} 
\langle (\zeta,\theta), (x,y) - (\alpha,\beta)\rangle \le T(x,y) - r +\varepsilon ||(x,y) - (\alpha,\beta)||,
\end{equation}
for all $(x,y) \in B((\alpha,\beta),\sigma)$.

It follows from (\ref{5}) that
$$\langle (\zeta,\theta), (x,y) - (\alpha,\beta)\rangle \le \varepsilon ||(x,y) - (\alpha,\beta)||,\qquad \forall (x,y) \in \mathcal{R}(r) \cap B((\alpha,\beta),\sigma).$$
This means that $(\zeta,\theta) \in \widehat{N}_{\mathcal{R}(r)} (\alpha,\beta) $.

Let $z(\cdot)$ be as in  the proof of Proposition \ref{eqH}.  By the triangle inequality, we have
$$T(z(t),\beta) \le T(z(t),\alpha) + T(\alpha,\beta) \le t+r,\quad\forall t\in [0,\tau].$$
In (\ref{5}), taking $x = z(t),y = \beta$, we obtain
$$\langle \zeta,z(t) - \alpha\rangle \le  t + \varepsilon ||z(t) - \alpha|| \le t + \varepsilon Mt.$$
Dividing the latter inequality by $t>0$ then letting $t\to 0$, we  have
$$-h(\alpha,\zeta) = \langle \zeta,-w\rangle = \langle \zeta,\dot{z}(0)\rangle \le 1 + M\varepsilon.$$
Since $\varepsilon >0$ is arbitrary, we obtain 
\begin{equation}
\label{6}h(\alpha,\zeta) \ge -1.
\end{equation} 

Let $x(\cdot)$ and $g(\cdot)$ be also as in the proof of Proposition \ref{eqH}. We have that $T(x(t),\beta) = r- t$ for all $t\in [0,r]$. In (\ref{5}), taking $x = x(t), y = \beta$, one has
$$\int_0^t \langle\zeta,\dot{x}(t)\rangle = \langle (\zeta,\beta),(x(t),\beta) - (\alpha,\beta)\rangle \le -t +\varepsilon ||(x(t),\beta) - (\alpha,\beta)|| \le -t +\varepsilon Mt.$$
Then
\begin{eqnarray*}
h(\alpha,\zeta)t &\le& \int_0^t\langle \zeta,g(s)\rangle ds = \int_0^t \langle \zeta,\dot{x}(s)\rangle ds + \int_0^t \langle \zeta, g(s) - \dot{x}(s)\rangle ds \nonumber\\
&\le& -t +\varepsilon Mt + ||\zeta||\int_0^t ||g(s)-\dot{x}(s)||ds \le -t +\varepsilon Mt + LM||\zeta||t^2.
\end{eqnarray*}
This implies that $h(\alpha,\zeta) \le -1$. Together with (\ref{6}) and Proposition \ref{eqH} we get $h(\alpha,\zeta) = h(\beta,-\theta) = -1$.

Conversely, let $(\zeta,\theta) \in \widehat{N}_{\mathcal{R}(r)}(\alpha,\beta)$ with $h(\alpha,\zeta) = h(\beta,-\theta) = -1$. We attempt to show that $(\zeta,\theta) \in \widehat{\partial}T(\alpha,\beta)$. Assume to the contrary that there exist a constant $C>0$ and a sequence $\{(\alpha_n,\beta_n)\}$ such that $(\alpha_n,\beta_n) \to (\alpha,\beta)$ as $n\to \infty$ and $(\alpha_n,\beta_n)\ne (\alpha,\beta)$ and
\begin{equation}
\label{7}
 \langle (\zeta,\theta),(\alpha_n-\alpha,\beta_n-\beta)\rangle + r - T(\alpha_n,\beta_n) > C||(\alpha_n-\alpha,\beta_n-\beta)||,\qquad\forall n.
\end{equation}
For each $n$, set $T_n := T(\alpha_n,\beta_n)$ and $\Lambda_n:= ||(\alpha_n-\alpha,\beta_n-\beta)||$. There are 3 possible cases.

\textbf{Case 1}.  $T_n = r$ for infinitely many $n$. It follows from (\ref{7}) that 
$$ \langle (\zeta,\theta),(\alpha_n-\alpha,\beta_n-\beta)\rangle > C||(\alpha_n-\alpha,\beta_n-\beta)||.$$
It is evident that this contradicts to $(\zeta,\theta) \in \widehat{N}_{\mathcal{R}(r)}(\alpha,\beta)$.

\textbf{Case 2}. $T_n >r$ for infinitely many $n$. It follows from (\ref{7}) that
\begin{equation}
\label{E0}
0< T_n - r  < ||(\zeta,\theta)||.||(\alpha_n-\alpha,\beta_n-\beta)|| < +\infty.
\end{equation}
Set $d_n:= (T_n-r)/2$.  Let $x_n(\cdot)$ be a trajectory of $F$ such that $x_n(0) = \alpha_n$ and $x_n(T_n) = \beta_n$. Set $u_n: = x_n(d_n), w_n:=x_n(T_n-d_n)$. Then $T(u_n,w_n) = r$, i.e., $(u_n,w_n) \in \mathcal{R}(r)$.

For $t\in [0,T_n]$, we have that
\begin{equation}
\label{8}
||x_n(t)-\alpha|| \le ||x_n(t)-\alpha_n|| + ||\alpha_n - \alpha|| \le Mt + ||\alpha_n-\alpha||,
\end{equation}
and
\begin{equation}
\label{9}
||x_n(T_n-t)-\beta|| \le ||x_n(T_n-t)-\beta_n|| + ||\beta_n - \beta|| \le Mt + ||\beta_n-\beta||.
\end{equation}
Let $p_n(\cdot)$ and $q_n(\cdot)$ be the projections of $\dot{x}_n(\cdot)$ on $F(\alpha)$ and $F(\beta)$ restricted on $[0,T_n]$, respectively. By Lipschitz continuity of $F$ and (\ref{8}), (\ref{9}),  one has, for all $t\in [0,T_n]$, that
\begin{equation}
\label{10}
||\dot{x}_n(t)-p_n(t)|| \le L||x_n(t) - \alpha|| \le LMt + L||\alpha_n - \alpha||,
\end{equation}
and
\begin{equation}
\label{11}
||\dot{x}_n(T_n-t) - q_n(T_n-t)|| \le L||x_n(T_n-t) - \beta|| \le LMt + L||\beta_n-\beta||.
\end{equation}
Using (\ref{E0}) - (\ref{9}) and the facts  $(u_n,w_n) \in \mathcal{R}(r), (\zeta,\theta)\in \widehat{N}_{\mathcal{R}(r)}(\alpha,\beta)$, for any $\varepsilon >0$, for $n$ sufficiently large, we have
\begin{eqnarray}
\label{12}
 \langle (\zeta,\theta),(u_n,w_n)-(\alpha,\beta)\rangle &\le& \varepsilon ||(u_n-\alpha,w_n-\beta)|| \nonumber\\
 &\le& \varepsilon( Md_n + ||\alpha_n -\alpha|| + Md_n + ||\beta_n-\beta|| ) \nonumber\\
 &=& \varepsilon (2Md_n + 2||(\alpha_n-\alpha,\beta_n-\beta)|| \le \varepsilon (M||(\zeta,\theta)|| + 2) \Lambda_n.
\end{eqnarray}
Moreover, using (\ref{E0}),  (\ref{10}), (\ref{11}) and the fact  $h(\alpha,\zeta) = h(\beta,-\theta) = -1$, one has
\begin{eqnarray}
\label{13}
\langle (\zeta,\theta),(\alpha_n,\beta_n)-(u_n,w_n)\rangle &=& \langle\zeta,\alpha_n-u_n\rangle + \langle\theta,\beta_n - w_n\rangle \nonumber\\
&=& \int_0^{d_n} \langle \zeta, -\dot{x}_n(t)\rangle dt + \int_{T_n-d_n}^{T_n} \langle \theta,\dot{x}_n(t)\rangle dt\nonumber\\
&=& \int_0^{d_n} \langle \zeta, -p_n(t)\rangle dt + \int_0^{d_n} \langle \zeta, p_n(t)-\dot{x}_n(t)\rangle dt  \nonumber\\
&&+ \int_{T_n-d_n}^{T_n} \langle \theta, q_n(t)\rangle dt+ \int_{T_n-d_n}^{T_n} \langle \theta,\dot{x}_n(t) - q_n(t)\rangle dt\nonumber\\
&\le&-h(\alpha,\zeta) d_n + ||\zeta||\int_0^{d_n} ||p_n(t)-\dot{x}_n(t)||dt \nonumber\\
&&\qquad-h(\beta,-\theta)d_n+ ||\theta|| \int_{T_n-d_n}^{T_n} ||\dot{x}_n(t)-q_n(t)|| dt \nonumber\\
&\le& 2d_n +L ||\zeta|| (Md_n^2 + ||\alpha_n-\alpha||d_n) + L||\theta|| (Md_n^2+||\beta_n-\beta||d_n)\nonumber\\
&\le& 2d_n + \kappa \Lambda_n^2, 
\end{eqnarray}
for some constant $\kappa >0$ independent of $n$.

From (\ref{7}), (\ref{12}) and  (\ref{13}), we have
\begin{eqnarray}
\label{14}
C \Lambda_n &<& -T_n + r +\langle (\zeta,\theta),(\alpha_n-\alpha,\beta_n-\beta)\rangle\nonumber\\
&=& -2d_n + \langle (\zeta,\theta),(u_n,w_n)-(\alpha,\beta)\rangle + \langle (\zeta,\theta),(\alpha_n,\beta_n)-(u_n,w_n)\rangle\nonumber\\
&\le& \varepsilon (M||(\zeta,\theta)|| + 2) \Lambda_n + \kappa \Lambda_n^2
\end{eqnarray}
Since $\Lambda_n >0$ for all $n$, it follows from (\ref{14}) that $C < \varepsilon (M||(\zeta,\theta)||+1) + \kappa \Lambda_n$. Letting $n\to \infty$ and then letting $\varepsilon \to 0+$ in the latter inequality, we get $C\le 0$. This is a contradiction.

\textbf{Case 3}. $T_n <r$ for infinitely many $n$. Set $h_n: = (r-T_n)/2$. Let $v\in F(\alpha)$ and $w\in F(\beta)$ be such that $\langle v,\zeta\rangle = \langle w,-\theta\rangle = -1$. Let
$$v_n: = \mathrm{Proj}_{F(\alpha_n)}(v)\quad \text{and}\quad w_n: = \mathrm{Proj}_{F(\beta_n)}(w).$$
Then by the Lipschitz continuity of $F$, we have
\begin{equation}
\label{15}
||v_n-v|| \le L ||\alpha_n-\alpha|| \quad \text{and} \quad ||w_n-w|| \le L ||\beta_n - \beta||.
\end{equation}
For each $n$, by Theorem \ref{C1}, there exist a $C^1$ trajectory $\alpha_n(\cdot)$ of $-F$ and a $C^1$ trajectory $\beta_n(\cdot)$ of $F$ such that
$$\alpha_n(0) = \alpha_n, \dot{\alpha_n}(0) = -v_n, \beta_n(0) = \beta_n \,\,\text{and}\,\,\dot{\beta}_n(0) = w_n,$$
and for some $K>0$,
\begin{equation}
\label{16}
||\dot{\alpha}_n(t) + v_n|| \le Kt,\qquad ||\dot{\beta}_n(t)-w_n|| \le Kt, \,\qquad \forall t\in [0,h_n].
\end{equation}
Set $\gamma_n := \alpha_n(h_n)$ and $\lambda_n: = \beta_n(h_n)$. Then by the triangle inequality,
$$T(\gamma_n,\lambda_n) \le T(\gamma_n,\alpha_n) + T(\alpha_n,\beta_n) + T(\beta_n,\lambda_n) \le h_n + T_n + h_n = r.$$
This means that $(\gamma_n,\lambda_n) \in \mathcal{R}(r)$. Then for any $\varepsilon>0$, for $n$ sufficiently large, we have that
\begin{eqnarray*}
\langle (\zeta,\theta),(\gamma_n,\lambda_n) - (\alpha,\beta)\rangle &\le& \varepsilon || (\gamma_n,\lambda_n) - (\alpha,\beta)|| \nonumber\\
&\le& \varepsilon \left( ||(\gamma_n,\lambda_n) - (\alpha_n,\beta_n)|| + ||(\alpha_n,\beta_n) - (\alpha,\beta)||  \right)\nonumber\\
&\le& \varepsilon \left(||\int_0^{h_n} \dot{\alpha}_n(t)dt|| + ||\int_0^{h_n} \dot{\beta}_n(t)dt|| + \Lambda_n \right)\nonumber\\
&\le& \varepsilon \left(2Mh_n + \Lambda_n   \right). 
\end{eqnarray*}
Thus
\begin{eqnarray}
\label{17}
\langle (\zeta,\theta), (\gamma_n,\lambda_n) - (\alpha_n,\beta_n)\rangle &=& \langle (\zeta,\theta),(\gamma_n,\lambda_n) - (\alpha,\beta)\rangle  + \langle (\zeta,\theta), (\alpha,\beta) - (\alpha_n,\beta_n)\rangle \nonumber\\
&\le&  \varepsilon \left(2Mh_n + \Lambda_n   \right) + ||(\zeta,\theta)||. ||(\alpha_n,\beta_n) - (\alpha,\beta)||\nonumber\\
&=& 2M\varepsilon h_n + \left(\varepsilon + ||(\zeta,\theta)||\right) \Lambda_n.
\end{eqnarray}
We now have that
\begin{eqnarray}
\label{18}
r-T_n &=& 2h_n = -\int_0^{h_n} \langle \zeta,v\rangle dt + \int_0^{h_n} \langle \theta,w\rangle dt \quad (\text{since}\,\langle\zeta,v\rangle = \langle -\theta,w\rangle =-1) \nonumber \\
&=& -\int_0^{h_n} \langle \zeta,v_n\rangle dt + \int_0^{h_n}\langle \zeta,v_n-v\rangle dt + \int_0^{h_n} \langle \theta,w_n\rangle dt +\int_0^{h_n} \langle \theta,w-w_n\rangle dt \nonumber \\
&\le& \int_0^{h_n} \langle \zeta,\dot{\alpha}_n(t)\rangle dt + \int_0^{h_n} \langle \zeta,-\dot{\alpha}_n(t)-v_n\rangle dt  + L||\zeta|| h_n ||\alpha_n - \alpha|| \nonumber\\
&&  +\int_0^{h_n} \langle \theta, \dot{\beta}_n(t)\rangle dt +\int_0^{h_n} \langle \theta,w_n- \dot{\beta}_n(t)\rangle dt + L||\theta||h_n||\beta_n-\beta||  \nonumber \\
&\le& \langle \zeta, \gamma_n - \alpha_n\rangle + K||\zeta|| h_n^2 +\langle \theta, \lambda_n-\beta_n\rangle+ K||\theta||h_n^2 + 2L||(\zeta,\theta)||. ||(\alpha_n-\alpha,\beta_n-\beta)||h_n\nonumber \\
&\le& \langle (\zeta,\theta), (\gamma_n,\lambda_n)-(\alpha_n,\beta_n)\rangle + K||(\zeta,\theta)|| h_n^2 + 2L||(\zeta,\theta)|| \Lambda_n h_n\nonumber\\
&\le& 2M\varepsilon h_n + \left(\varepsilon + ||(\zeta,\theta)||\right) \Lambda_n + K||(\zeta,\theta)|| h_n^2 + 2L||(\zeta,\theta)|| \Lambda_n h_n\nonumber\\
&=& \left[ 2M\varepsilon + K||(\zeta,\theta)||h_n + 2L||(\zeta,\theta)|| \Lambda_n\right] h_n + (\varepsilon + ||(\zeta,\theta)||)\Lambda_n.
\end{eqnarray}
Since $h_n\to 0$, $\Lambda_n \to 0$ as $n\to \infty$ and $\varepsilon >0$ is arbitrary, we can choose $\varepsilon>0$ small enough such that for $n$ sufficiently large, $\left[ 2M\varepsilon + K||(\zeta,\theta)||h_n + 2L||(\zeta,\theta)|| \Lambda_n\right]  < 1$. Then there is some constant $Q>0$ depending only on $\zeta,\theta$ such that for $n$ sufficiently large,
\begin{equation}
\label{19}
h_n \le Q\Lambda_n.
\end{equation}
Now
\begin{eqnarray}
\label{27}
&&T_n - r - \langle (\zeta,\theta),(\alpha_n,\beta_n) - (\alpha,\beta)\rangle \nonumber\\
&&\qquad\qquad =  -2h_n + \langle (\zeta,\theta),(\gamma_n,\lambda_n) - (\alpha_n,\beta_n)\rangle - \varepsilon||(\gamma_n,\lambda_n)-(\alpha,\beta)|| \nonumber\\
&&\qquad\qquad \ge -2h_n + \langle \zeta,\gamma_n - \alpha_n\rangle + \langle \theta,\lambda_n - \beta_n\rangle - \varepsilon||(\gamma_n,\lambda_n)-(\alpha,\beta)|| \nonumber\\
&&\qquad\qquad = -2h_n +\int_0^{h_n} \langle \zeta,\dot{\alpha_n}(t)\rangle dt + \int_0^{h_n} \langle \theta,\dot{\beta}_n(t)\rangle dt -\varepsilon(2Mh_n + \Lambda_n)\nonumber\\
&&\qquad\qquad =\int_0^{h_n} \langle \zeta, v-v_n\rangle dt + \int_0^{h_n} \langle\zeta, \dot{\alpha}_n(t)+v_n\rangle + \int_0^{h_n} \langle \theta,w_n-w\rangle dt\nonumber\\
&&\qquad\qquad\qquad + \int_0^{h_n} \langle \theta,\dot{\beta}_n(t)-w_n\rangle dt -\varepsilon(2Mh_n+\Lambda_n)\nonumber\\
&&\qquad\qquad \ge-L||\zeta||.||\alpha_n-\alpha||h_n - K||\zeta||.||\beta_n - \beta||h_n^2 - L||\theta||h_n - K||\theta||h_n^2 - \varepsilon(2Mh_n + \Lambda_n) \nonumber\\
&&\qquad\qquad \ge -2Q||(\zeta,\theta)||[L+KQ]\Lambda_n^2 -\varepsilon(2MQ+1)\Lambda_n.
\end{eqnarray}
Then, by (\ref{7}) and (\ref{27}),
$$C\Lambda_n < 2Q||(\zeta,\theta)||[L+KQ]\Lambda_n^2 +\varepsilon(2MQ+1)\Lambda_n.$$
Dividing both sides of the latter inequality by $\Lambda_n >0$ then letting $n\to \infty$, we get
$$C\le \varepsilon(2MQ+1).$$
Letting $\varepsilon \to 0+$, we obtain $C\le 0$. This leads to a contradiction. The proof is complete.
\end{proof}
Singular subdifferentials are connected to the non-Lipschitzianity  of a function. It is evident that if the Fr\'echet singular subdifferential of  a function $f$ at a point is nonempty, then $f$ is not Lipschitz around that point. In the next two theorems, we derive formulas for the Fr\'echet singular subdifferentials of the bilateral minimal time function $T$. These formulas may be useful when we study the non-Lipschitz set of $T$. In this paper, the representations of the Fr\'echet singular subdifferentails, together with the representations of the Fr\'echet subdifferentials, are used to study the connection between the Fr\'echet normals to sub-level sets of $T$ and to its epigraph.
\begin{Theorem}
Let $\alpha \in \R^n$. We have
\begin{equation*}
\widehat{\partial}^\infty T(\alpha,\alpha) = \{(\zeta,-\zeta)\in \R^n \times \R^n: h(\alpha,\zeta)\ge 0\}.
\end{equation*}
\end{Theorem}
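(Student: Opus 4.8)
The plan is to exploit the characterisation of the singular subdifferential recorded in Section 2: namely that $(\zeta,\theta)\in\widehat{\partial}^\infty T(\alpha,\alpha)$ if and only if for every $\varepsilon>0$ there is a $\delta>0$ with
$$\langle(\zeta,\theta),(x,y)-(\alpha,\alpha)\rangle\le\varepsilon\bigl(\|(x,y)-(\alpha,\alpha)\|+\gamma\bigr)$$
for all $(x,y)\in B((\alpha,\alpha),\delta)$ and all $\gamma\ge T(x,y)$, where I have used $T(\alpha,\alpha)=0$. Since enlarging $\gamma$ only weakens the inequality, it suffices to test it with $\gamma=T(x,y)$ on $\mathrm{dom}(T)$. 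The argument then runs closely parallel to the proof of the formula for $\widehat{\partial}T(\alpha,\alpha)$, the only structural change being that the ``$-1$'' coming from the epigraph slope is replaced by ``$0$'': the singular normal has zero last coordinate, so no descent term $-t$ appears, and the Hamiltonian bound $h(\alpha,\zeta)\ge-1$ becomes $h(\alpha,\zeta)\ge0$.

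For the inclusion $\subseteq$, let $(\zeta,\theta)\in\widehat{\partial}^\infty T(\alpha,\alpha)$. First I would recover the diagonal structure $\zeta=-\theta$ exactly as before: testing along $x=y$, where $T(x,x)=0$ permits $\gamma=0$, yields $\langle(\zeta,\theta),(w,w)\rangle\le0$ for every $w\in\R^n$ after inserting $x=\alpha+w/n$ and letting $\varepsilon\to0+$, hence $\zeta+\theta=0$. Next, to obtain $h(\alpha,\zeta)\ge0$, I would pick $v\in F(\alpha)$ with $\langle v,\zeta\rangle=h(\alpha,\zeta)$, use Theorem \ref{C1} to get a $C^1$ trajectory $z(\cdot)$ of $-F$ with $z(0)=\alpha$ and $\dot z(0)=-v$, and test the inequality at $(x,y)=(z(t),\alpha)$ with $\gamma=T(z(t),\alpha)\le t$. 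This gives $\langle\zeta,z(t)-\alpha\rangle\le\varepsilon(Mt+t)$; dividing by $t$, letting $t\to0+$ and then $\varepsilon\to0+$ yields $\langle\zeta,-v\rangle\le0$, i.e.\ $h(\alpha,\zeta)=\langle v,\zeta\rangle\ge0$.

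For the reverse inclusion, let $h(\alpha,\zeta)\ge0$ and suppose $(\zeta,-\zeta)\notin\widehat{\partial}^\infty T(\alpha,\alpha)$. Negating the characterisation produces a constant $C>0$ and points $(\alpha_n,\beta_n)\to(\alpha,\alpha)$ in $\mathrm{dom}(T)$, with $(\alpha_n,\beta_n)\ne(\alpha,\alpha)$, such that
$$\langle\zeta,\alpha_n-\beta_n\rangle>C\bigl(\Lambda_n+T_n\bigr),\qquad \Lambda_n:=\|(\alpha_n-\alpha,\beta_n-\alpha)\|,\ \ T_n:=T(\alpha_n,\beta_n).$$
From the crude bound $\langle\zeta,\alpha_n-\beta_n\rangle\le\sqrt2\,\|\zeta\|\,\Lambda_n$ this first forces $T_n\le(\sqrt2\|\zeta\|/C)\Lambda_n$, so $T_n=O(\Lambda_n)$. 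Writing $\alpha_n-\beta_n=-\int_0^{T_n}\dot x_n(t)\,dt$ for an optimal trajectory $x_n(\cdot)$ and projecting $\dot x_n(\cdot)$ onto $F(\alpha)$ as in the earlier proofs, the leading term $-\int_0^{T_n}\langle\zeta,\mathrm{Proj}_{F(\alpha)}(\dot x_n(t))\rangle\,dt$ is $\le0$ because $h(\alpha,\zeta)\ge0$, while the Lipschitz remainder is $O(\Lambda_n^2)$ thanks to $T_n=O(\Lambda_n)$ and $\|x_n(t)-\alpha\|=O(\Lambda_n)$. Hence $\langle\zeta,\alpha_n-\beta_n\rangle\le K\Lambda_n^2$ for some constant $K$; combined with the negated inequality this gives $C\Lambda_n<K\Lambda_n^2$, and dividing by $\Lambda_n>0$ and letting $n\to\infty$ yields $C\le0$, a contradiction.

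The step I expect to require the most care is the a priori estimate $T_n=O(\Lambda_n)$: it is what converts the linear trajectory error into a genuinely quadratic remainder and thereby defeats the linear lower bound $C\Lambda_n$. In the singular case one cannot rely on the $+r-T_n$ balancing used in Theorem \ref{PN}, so this bound must be extracted directly from the crude estimate $\langle\zeta,\alpha_n-\beta_n\rangle\le\sqrt2\|\zeta\|\Lambda_n$ together with the strict inequality $C(\Lambda_n+T_n)<\langle\zeta,\alpha_n-\beta_n\rangle$; everything else is a routine adaptation of the two preceding proofs.
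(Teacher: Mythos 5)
Your proposal is correct and matches the paper's own proof in all essentials: the diagonal test with $x=y=\alpha+v/n$ yielding $\zeta=-\theta$, the reversed $C^1$ trajectory of $-F$ with $\dot z(0)=-v$ from Theorem \ref{C1} yielding $h(\alpha,\zeta)\ge 0$, and, for the converse, the contradiction argument using an optimal trajectory, projection of its velocity onto $F(\alpha)$, and the a priori bound $T_n=O(\Lambda_n)$ extracted from the negated inequality (the paper's inequality (\ref{0E})). The only difference is cosmetic bookkeeping in the final estimate: you convert the remainder to $K\Lambda_n^2$ and divide by $\Lambda_n$, whereas the paper bounds by $C_1(T_n+\Lambda_n)^2$ and divides by $T_n+\Lambda_n$; both hinge on exactly the same a priori estimate.
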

\begin{proof}
Let $(\zeta,\theta) \in \widehat{\partial}^\infty T(\alpha,\alpha)$. Then for any $\varepsilon >0$, there exists $\sigma >0$ such that
\begin{equation}
\label{21}
\langle (\zeta,\theta), (x,y) - (\alpha,\alpha)\rangle \le \varepsilon (||(x,y)-(\alpha,\alpha)|| + \lambda),
\end{equation}
for all $(x,y)\in B((\alpha,\alpha),\sigma)$ and $\lambda \ge T(x,y)$.

Let $v\in \R^n$. For each $n\in \N^*$, taking $x= y = \alpha + v/n$ and $\lambda = 0$ in (\ref{21}), we have
$$\langle (\zeta,\theta),(v/n,v/n)\rangle \le \varepsilon ||(v/n,v/n)||,$$
or, equivalently,
$$\langle (\zeta,\theta),(v,v)\rangle \le \varepsilon ||(v,v)||.$$
Letting $\varepsilon \to 0+$ in the latter inequality, we get $\langle (\zeta,\theta),(v,v)\rangle \le 0$ for all $v\in \R^n$. This yields $\zeta = -\theta$.

Let $w\in F(\alpha)$ be such that $\langle w,\zeta\rangle = h(\alpha,\zeta)$. By Theorem \ref{C1}, there is a $C^1$ trajectory $x(\cdot)$ 
of $-F$ such that $x(0) = \alpha$ and $\dot{x}(0) = -w$. For $t>0$ sufficiently small, we have $x(t)\in B(\alpha,\sigma)$ and, of course, $T(x(t),\alpha) \le t$. Taking $x = x(t), y = \alpha$ and $\lambda = t$ in (\ref{21}), we  have that
$$\langle \zeta, x(t) - \alpha\rangle \le \varepsilon (||x(t) -\alpha|| + t) ,$$
and then
$$\langle \zeta, x(t) - x(0)\rangle \le \varepsilon (Mt + t).$$
Dividing the latter inequality by $t>0$ then letting $t\to 0+$ and keeping in mind that $\dot{x}(0) = -w$, one gets $\langle \zeta,-w\rangle  \le \varepsilon(M+1)$. Since $\varepsilon >0$ is arbitrary, we conclude that $h(\alpha,\zeta) = \langle \zeta,w\rangle \ge 0$.

Now let $(\alpha,\zeta) \in \R^n \times \R^n$ be such that $h(\alpha,\zeta)\ge 0$. Assume that $(\zeta,-\zeta) \not\in \widehat{\partial}^\infty T(\alpha,\alpha)$, then there exist a constant $C>0$, sequences $\{(\alpha_n,\beta_n)\} \subset \R^n \times \R^n, \{\lambda_n\} \subset \R$ such that $(\alpha_n,\beta_n) \to (\alpha,\alpha)$, $(\alpha_n,\beta_n) \ne (\alpha,\alpha)$, $\lambda_n \ge T(\alpha_n,\beta_n)$  and
$$ \langle (\zeta,-\zeta),(\alpha_n,\beta_n) - (\alpha,\alpha)\rangle  > C(||(\alpha_n,\beta_n) - (\alpha,\alpha)|| + \lambda_n), \quad \forall n.$$
The latter implies that
\begin{equation}
\label{0E}
\langle \zeta, \alpha_n - \beta_n\rangle > C(||(\alpha_n,\beta_n)- (\alpha,\alpha)|| + T(\alpha_n,\beta_n)),\quad \forall n.
\end{equation}
Set $T_n:=T(\alpha_n,\beta_n)$. It follows from (\ref{0E}) that $T_n \le 2||\zeta||. ||\alpha_n - \beta_n||/C <\infty$ for all $n$. Thus, for each $n$, there exists a trajectory $x_n(\cdot)$ of $F$ such that $x_n(0) = \alpha_n, x_n(T_n) = \beta_n$. By Gronwall's Lemma, for all $t\in [0,T_n]$,
$$||x_n(t) - \alpha|| \le ||x_n(t)- \alpha_n|| + ||\alpha_n-\alpha|| \le MT_n + ||(\alpha_n,\beta_n)-(\alpha,\alpha)||.$$
Let $y_n(\cdot) := \mathrm{Proj}_{F(\alpha)}(\dot{x}_n(\cdot))$ on $[0,T_n]$. By the Lipschitz continuity of $F$, 
$$|| y_n(t) - \dot{x}_n(t)|| \le L||x_n(t)-\alpha|| \le LMT_n + L||(\alpha_n,\beta_n)-(\alpha,\alpha)||.$$
Now
\begin{eqnarray}
\label{22}
C(||(\alpha_n,\beta_n)- (\alpha,\alpha)|| + T_n) &\le& \langle \zeta, \alpha_n - \beta_n\rangle \nonumber\\
&\le& \int_0^{T_n} \langle \zeta,\dot{x}_n(t)\rangle dt -\int_0^{T_n}\langle \zeta,y_n(t)\rangle dt\qquad (\text{since}\,\, h(\alpha,\zeta) \ge 0) \nonumber\\
&\le& ||\zeta|| \int_0^{T_n} ||\dot{x}_n(t) - y_n(t)||dt \nonumber\\
&\le&L||\zeta|| (MT_n^2 + ||(\alpha_n,\beta_n)-(\alpha,\alpha)||T_n)\nonumber\\
&\le& C_1 (T_n + ||(\alpha_n,\beta_n)-(\alpha,\alpha)||)^2
\end{eqnarray}
for some constant $C_1>0$ depending only on $\zeta$, $M$ and $L$.

Since $T_n + ||(\alpha_n,\beta_n)-(\alpha,\alpha)|| >0$ for all $n$, it follows from (\ref{22}) that $C \le C_1  (T_n + ||(\alpha_n,\beta_n)-(\alpha,\alpha)||)$ for all $n$. Letting $n\to \infty$ in the latter inequality we get $C\le 0$. This is a contradiction. Thus $(\zeta,-\zeta)\in \widehat{\partial}^\infty T(\alpha,\alpha)$.
\end{proof}

\begin{Theorem} \label{HPN}
For $(\alpha,\beta)\in\mathcal{R}$ with $0<r: = T(\alpha,\beta)$, we have
$$\widehat{\partial}^\infty T(\alpha,\beta) = \widehat{N}_{\mathcal{R}(r)}(\alpha,\beta) \cap \{(\zeta,\theta) \in \R^n \times\R^n : h(\alpha,\zeta) = h(\beta,-\theta) = 0\}.$$
\end{Theorem}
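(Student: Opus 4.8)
The plan is to mirror the proof of Theorem \ref{PN}, replacing the normalization $h=-1$ by $h=0$ throughout; the effect is that the first-order terms that were equal to $\pm1$ there now vanish. Recall from the characterization of $\widehat{\partial}^\infty$ in Section 2 that $(\zeta,\theta)\in\widehat{\partial}^\infty T(\alpha,\beta)$ means: for every $\varepsilon>0$ there is $\sigma>0$ with $\langle(\zeta,\theta),(x,y)-(\alpha,\beta)\rangle\le\varepsilon(\|(x,y)-(\alpha,\beta)\|+|\lambda-r|)$ for all $(x,y)\in B((\alpha,\beta),\sigma)$ and all $\lambda\ge T(x,y)$. For the inclusion $\subseteq$, given such a $(\zeta,\theta)$ I would first restrict to $(x,y)\in\mathcal{R}(r)$ and take $\lambda=r$; then $|\lambda-r|=0$, and this shows immediately that $(\zeta,\theta)\in\widehat{N}_{\mathcal{R}(r)}(\alpha,\beta)$, whence Proposition \ref{eqH} gives $h(\alpha,\zeta)=h(\beta,-\theta)\le0$. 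It remains to prove $h(\alpha,\zeta)\ge0$, which I would do exactly as in the step of Theorem \ref{PN} yielding $h(\alpha,\zeta)\ge-1$: choose $w\in F(\alpha)$ with $\langle w,\zeta\rangle=h(\alpha,\zeta)$, use Theorem \ref{C1} to obtain a $C^1$ trajectory $z(\cdot)$ of $-F$ with $z(0)=\alpha$, $\dot z(0)=-w$, note $T(z(t),\beta)\le t+r$ by the triangle inequality, and apply the singular inequality with $x=z(t)$, $y=\beta$, $\lambda=t+r$ (so $|\lambda-r|=t$). Dividing by $t$, letting $t\to0+$ and then $\varepsilon\to0+$ gives $\langle\zeta,-w\rangle\le0$, i.e. $h(\alpha,\zeta)\ge0$; with Proposition \ref{eqH} this forces $h(\alpha,\zeta)=h(\beta,-\theta)=0$.

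For the inclusion $\supseteq$ I would take $(\zeta,\theta)\in\widehat{N}_{\mathcal{R}(r)}(\alpha,\beta)$ with $h(\alpha,\zeta)=h(\beta,-\theta)=0$ and argue by contradiction. Negating the singular inequality yields $C>0$, points $(\alpha_n,\beta_n)\to(\alpha,\beta)$ with $(\alpha_n,\beta_n)\ne(\alpha,\beta)$, and scalars $\lambda_n\ge T_n:=T(\alpha_n,\beta_n)$ such that $\langle(\zeta,\theta),(\alpha_n,\beta_n)-(\alpha,\beta)\rangle>C(\Lambda_n+|\lambda_n-r|)$, where $\Lambda_n:=\|(\alpha_n,\beta_n)-(\alpha,\beta)\|$. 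Since $\lambda_n\ge T_n$ forces $|\lambda_n-r|\ge(T_n-r)^+$, it suffices to contradict $\langle(\zeta,\theta),(\alpha_n,\beta_n)-(\alpha,\beta)\rangle>C(\Lambda_n+(T_n-r)^+)$, and I would split on the sign of $T_n-r$. If $T_n\le r$ for infinitely many $n$, then $(\alpha_n,\beta_n)\in\mathcal{R}(r)$ and $(T_n-r)^+=0$, so the violation reads $\langle(\zeta,\theta),(\alpha_n,\beta_n)-(\alpha,\beta)\rangle>C\Lambda_n$, which directly contradicts $(\zeta,\theta)\in\widehat{N}_{\mathcal{R}(r)}(\alpha,\beta)$ upon taking $\varepsilon<C$. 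This is the analogue of Case 1 of Theorem \ref{PN}, and here it also absorbs the subcase $T_n<r$ that was the delicate one there.

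The remaining case, $T_n>r$ for infinitely many $n$, is the analogue of Case 2 of Theorem \ref{PN} and is where the real work lies. Set $d_n:=(T_n-r)/2$; combining the violation with $\langle(\zeta,\theta),\cdot\rangle\le\|(\zeta,\theta)\|\Lambda_n$ gives $T_n-r=O(\Lambda_n)$, hence $d_n=O(\Lambda_n)$. I would take a trajectory $x_n(\cdot)$ of $F$ joining $\alpha_n$ to $\beta_n$ on $[0,T_n]$ and trim both ends, $u_n:=x_n(d_n)$, $w_n:=x_n(T_n-d_n)$, so that $T(u_n,w_n)=r$ and $(u_n,w_n)\in\mathcal{R}(r)$. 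Writing $\langle(\zeta,\theta),(\alpha_n,\beta_n)-(\alpha,\beta)\rangle=\langle(\zeta,\theta),(u_n,w_n)-(\alpha,\beta)\rangle+\langle(\zeta,\theta),(\alpha_n,\beta_n)-(u_n,w_n)\rangle$, I bound the first summand by a constant multiple of $\varepsilon\Lambda_n$ using $(u_n,w_n)\in\mathcal{R}(r)$ and the normal cone property, exactly as in estimate (\ref{12}). For the second summand I reproduce estimate (\ref{13}), projecting $\dot x_n$ onto $F(\alpha)$ and $F(\beta)$ and invoking (F2); the one difference is that the leading contribution there is $-h(\alpha,\zeta)d_n-h(\beta,-\theta)d_n$, which equalled $2d_n$ when $h=-1$ but now vanishes, leaving only a term $\kappa\Lambda_n^2$. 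Altogether this produces $C\Lambda_n<\varepsilon C_2\Lambda_n+\kappa\Lambda_n^2$; dividing by $\Lambda_n>0$, letting $n\to\infty$ and then $\varepsilon\to0+$ yields $C\le0$, the required contradiction.

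The main obstacle is the bookkeeping in the case $T_n>r$, namely verifying that every first-order contribution genuinely carries the factor $h=0$ and thus cancels, so that only the quadratic remainder $O(\Lambda_n^2)$ survives to be beaten by the linear lower bound $C\Lambda_n$; every other step is a routine transcription of Theorem \ref{PN} and Proposition \ref{eqH} with the constant $-1$ replaced by $0$.
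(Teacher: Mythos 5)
Your proposal is correct and follows essentially the same approach as the paper: the forward inclusion via restriction to $\mathcal{R}(r)$ (with $\lambda=r$) plus the reversed-trajectory argument giving $h(\alpha,\zeta)\ge 0$, and the reverse inclusion by contradiction with the two cases $T_n\le r$ and $T_n>r$, trimming the trajectory at both ends and using projections onto $F(\alpha)$ and $F(\beta)$. The only difference is trivial bookkeeping in the case $T_n>r$: you absorb $d_n=O(\Lambda_n)$ into $\Lambda_n$ and divide by $\Lambda_n$, while the paper keeps $|\lambda_n-r|\ge 2h_n$ on the left and divides by $\Lambda_n+2h_n$.
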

\begin{proof}

Let $(\zeta,\theta) \in \widehat{\partial}^\infty T(\alpha,\beta)$. Then for any $\varepsilon >0$,  there exists $\eta >0$ such that
\begin{equation}\label{E8}
\langle (\zeta,\theta), (x,y) - (\alpha,\beta)\rangle \le \varepsilon \left( ||(x,y) - (\alpha,\beta)|| + |\lambda -T(\alpha,\beta)|\right),
\end{equation}
for all $(x,y) \in B((\alpha,\beta),\eta)$ and $\lambda \ge T(x,y)$.\\
It deduces from (\ref{E8}) that
\begin{equation*}
\langle (\zeta,\theta), (x,y) - (\alpha,\beta)\rangle \le \varepsilon ||(x,y) - (\alpha,\beta)||,
\end{equation*}
for all $(x,y) \in B((\alpha,\beta),\eta) \cap \mathcal{R}(r)$. This means that $(\zeta,\theta) \in \widehat{N}_{\mathcal{R}(r)}(\alpha,\beta)$.

Let $z(\cdot)$ be as in the proof of Proposition \ref{eqH}. By the triangle inequality, we have
$$T(z(t),\beta) \le T(z(t),\alpha) + T(\alpha,\beta) \le t+r,\quad\forall t\in [0,T].$$
In (\ref{E8}), taking $x = z(t),y = \beta$ and $\lambda = t+r$ with $t>0$ sufficiently small, we obtain
$$\langle \zeta,z(t) - \alpha\rangle \le \varepsilon (||z(t) - \alpha|| + t) \le \varepsilon (M+1)t.$$
Dividing the latter inequality by $t>0$ then letting $t\to 0$, we  have
$$-h(\alpha,\zeta) = \langle \zeta,-w\rangle = \langle \zeta,\dot{z}(0)\rangle \le \varepsilon(M+1).$$
Since $\varepsilon >0$ is arbitrary, we conclude that $h(\alpha,\zeta) \ge 0$. Combining with Proposition \ref{eqH}, we obtain $h(\alpha,\zeta) = h(\beta,-\theta) = 0$.

Now let $(\zeta,\theta) \in \widehat{N}_{\mathcal{R}(r)}(\alpha,\beta)$ with $h(\alpha,\zeta) = h(\beta,-\theta) = 0$. We will show that $(\zeta,\theta) \in \widehat{\partial}^\infty T(\alpha,\beta)$. Assume to the contrary that $(\zeta,\theta)\not\in \widehat{\partial}^\infty T(\alpha,\beta)$, then there exist a constant $C>0$ and sequences $\{(\alpha_n,\beta_n)\} \subset \R^n \times \R^n$, $\{\lambda_n\}\subset \R$ such that $(\alpha_n,\beta_n) \to (\alpha,\beta)$, $(\alpha_n,\beta_n) \ne (\alpha,\beta)$, $\lambda_n \ge T_n: = T(\alpha_n,\beta_n)$ and
\begin{equation}
\label{E9}
\langle (\zeta,\theta), (\alpha_n,\beta_n) - (\alpha,\beta)\rangle > C \left( ||(\alpha_n,\beta_n) - (\alpha,\beta)|| + | \lambda_n -T(\alpha,\beta)|\right),\quad \forall n.
\end{equation}
 There are two cases.\\

\textbf{Case 1}. $T_n \le r$ for infinitely many $n$. In this case $(\alpha_n,\beta_n) \in \mathcal{R}(r)$. Since $(\zeta,\theta) \in \widehat{N}_{\mathcal{R}(r)}(\alpha,\beta)$, for any $\varepsilon >0$, there is a number $n_0\in \N$ such that for all $n \ge n_0$, we have
$$\langle (\zeta,\theta), (\alpha_n,\beta_n) - (\alpha,\beta)\rangle \le \varepsilon  ||(\alpha_n,\beta_n) - (\alpha,\beta)||.$$
Combining with (\ref{E9}) we have, for $n$ sufficiently large, that
$$C||(\alpha_n,\beta_n) - (\alpha,\beta)|| \le C \left( ||(\alpha_n,\beta_n) - (\alpha,\beta)|| + | \lambda_n -T(\alpha,\beta)|\right) <\varepsilon  ||(\alpha_n,\beta_n) - (\alpha,\beta)||.$$
Since $ ||(\alpha_n,\beta_n) - (\alpha,\beta)|| >0$, it follows from the latter inequalities that $C <\varepsilon$. This is a contradiction since $\varepsilon >0$ is arbitrary.

\textbf{Case 2}. $T_n >r$ for infinitely many $n$. Set  $h_n : = (T_n-r)/2$. Let $x_n(\cdot)$ be a trajectory of $F$ such that $x_n(0) =\alpha_n$ and $x_n(T_n) =\beta_n$. Set $\gamma_n: = x_n(h_n), w_n:= x_n(h_n+r) = x_n(T_n-h_n)$. Then $T(\gamma_n,w_n) = r$ and therefore $(\gamma_n,w_n) \in \mathcal{R}(r)$.\\
For $t\in [0,T_n]$, we have
\begin{equation}
\label{E10}
||x_n(t)-\alpha|| \le ||x_n(t) - \alpha_n|| + ||\alpha_n-\alpha|| \le Mt + ||\alpha_n-\alpha||,
\end{equation}
and
\begin{equation}
\label{E11}
||x_n(T_n-t)-\beta|| \le ||x_n(T_n-t) - \beta_n|| + ||\beta_n-\beta|| \le Mt + ||\beta_n-\beta||.
\end{equation}
Let $p_n(\cdot)$ and $q_n(\cdot)$ be the projections of $\dot{x}_n(\cdot)$ on $F(\alpha)$ and $F(\beta)$, respectively, restricted to $[0,T_n]$. From (\ref{E10}), (\ref{E11}) and by Lipschitz continuity of $F$ we have
\begin{equation}
\label{E12}
||\dot{x}_n(t) -p_n(t)|| \le L||x_n(t) - \alpha|| \le LMt + L||\alpha_n-\alpha||,
\end{equation} 
and
\begin{equation}
\label{E13}
||\dot{x}_n(T_n-t) -q_n(T_n-t)|| \le L||x_n(T_n-t) - \beta|| \le LMt + L||\beta_n-\beta||,
\end{equation}
for all $t\in [0,T_n]$.

Now
\begin{equation}
\label{E14}
\langle (\zeta,\theta),(\alpha_n,\beta_n) -(\alpha,\beta) \rangle = \langle (\zeta,\theta),(\alpha_n,\beta_n) -(\gamma_n,w_n) \rangle + \langle (\zeta,\theta),(\gamma_n,w_n) -(\alpha,\beta) \rangle.
\end{equation}

We first estimate the second term on the right-hand side of (\ref{E14}). Since $(\gamma_n,w_n) \in \mathcal{R}(r)$ and $(\zeta,\theta) \in \widehat{N}_{\mathcal{R}(r)}(\alpha,\beta)$, for any $\varepsilon >0$ and for all $n$ sufficiently large, one has
\begin{equation*}
\langle (\zeta,\theta),(\gamma_n,w_n) - (\alpha,\beta)\rangle \le \varepsilon ||(\gamma_n,w_n) - (\alpha,\beta)||.
\end{equation*}
Using (\ref{E10}) - (\ref{E11}), 
\begin{eqnarray}
\label{E15}
\langle (\zeta,\theta),(\gamma_n,w_n) - (\alpha,\beta)\rangle &\le& \varepsilon \{ (Mh_n + ||\alpha_n-\alpha||) + (Mh_n+||\beta_n-\beta||) \} \nonumber\\
&\le& 2(M+1)\varepsilon(||(\alpha_n,\beta_n) - (\alpha,\beta)|| +2h_n).
\end{eqnarray}
Using (\ref{E12}) - (\ref{E13}) and the fact $h(\alpha,\zeta) = h(\beta,-\theta)=0$, we can estimate the first term on the right-hand side of (\ref{E14}) as follows
\begin{eqnarray}
\label{E16}
\langle (\zeta,\theta),(\alpha_n,\beta_n) -(\gamma_n,w_n) \rangle &=& \langle \zeta,\alpha_n-\gamma_n \rangle + \langle \theta,\beta_n-w_n\rangle \nonumber \\
&=& -\int_0^{h_n} \langle\zeta,\dot{x}_n(s)\rangle ds + \int_{T_n-h_n}^{T_n} \langle \theta,\dot{x}_n(s)\rangle ds\nonumber\\
&=& -\int_0^{h_n} \langle\zeta,p_n(s)\rangle ds + \int_0^{h_n} \langle\zeta,p_n(s)-\dot{x}_n(s)\rangle ds \nonumber\\
&&+ \int_{T_n-h_n}^{T_n} \langle \theta,q_n(s)\rangle ds + \int_{T_n-h_n}^{T_n} \langle \theta,\dot{x}_n(s)-q_n(s)\rangle ds\nonumber\\
&\le& -h(\alpha,\zeta)h_n + ||\zeta||\int_0^{h_n} ||p_n(s)-\dot{x}_n(s)||ds \nonumber\\
&&-h(\beta,-\theta)h_n + ||\theta||\int_{T_n-h_n}^{T_n}||\dot{x}_n(s) - q_n(s)|| ds\nonumber\\
&\le& ||\zeta||(LMh_n^2 + Lh_n||\alpha_n-\alpha||) + ||\theta||(LMh_n^2 + Lh_n||\beta_n-\beta||)\nonumber\\
&\le& C_0(||(\alpha_n,\beta_n) - (\alpha,\beta)||+ 2h_n)^2,
\end{eqnarray}
for some suitable constant $C_0>0$.\\
From (\ref{E14}) -(\ref{E16}), we have
\begin{equation*}
\langle (\zeta,\theta),(\alpha_n,\beta_n) -(\alpha,\beta) \rangle \le 2(M+1)\varepsilon (||(\alpha_n,\beta_n) - (\alpha,\beta)|| +2h_n) + C_0(||(\alpha_n,\beta_n) - (\alpha,\beta)|| +2h_n)^2.
\end{equation*}
Combining with (\ref{E9}), we have that
\begin{eqnarray*}
C \left( ||(\alpha_n,\beta_n) - (\alpha,\beta)|| + 2h_n\right) &\le& C \left( ||(\alpha_n,\beta_n) - (\alpha,\beta)|| + | \lambda_n -T(\alpha,\beta)|\right)\\
&<& 2(M+1)\varepsilon (||(\alpha_n,\beta_n) - (\alpha,\beta)|| +2h_n) \\
&&+ \,C_0(||(\alpha_n,\beta_n) - (\alpha,\beta)|| +2h_n)^2.
\end{eqnarray*}
It follows that 
$$C < 2(M+1)\varepsilon + C_0(||(\alpha_n,\beta_n) - (\alpha,\beta)|| +2h_n).$$
Letting $n\to \infty$ and then letting $\varepsilon \to 0+$ in the latter inequality, we get $C\le 0$. This contradiction ends the proof. 
\end{proof}

The next theorem gives a connection between Fr\'echet normals to sub-level sets and to the epigraph of the bilateral minimal time function.
\begin{Theorem} 
\label{RE}
Let $(\alpha,\beta) \in \mathcal{R}$ be such that $0 <r:=T(\alpha,\beta)$.
\item[(i)] If $(\zeta,\theta) \in \widehat{N}_{\mathcal{R}(r)}(\alpha,\beta)$ then $h(\alpha,\zeta) = h(\beta,-\theta)$ and $((\zeta,\theta),h(\alpha,\zeta)) \in \widehat{N}_{\mathrm{epi}(T)}((\alpha,\beta),r)$.
\item[(ii)] If $(\alpha,\beta) \in \R^n\times \R^n$ and $\lambda \in \R$ satisfy $((\zeta,\theta),\lambda) \in \widehat{N}_{\mathrm{epi}(T)}((\alpha,\beta),r)$, then $\lambda \le 0$, $h(\alpha,\zeta) = h(\beta,-\theta) = \lambda$ and $(\zeta,\theta) \in \widehat{N}_{\mathcal{R}(r)}(\alpha,\beta)$.
\end{Theorem}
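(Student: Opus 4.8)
The plan is to deduce Theorem \ref{RE} entirely from the already-established representations in Proposition \ref{eqH}, Theorem \ref{PN}, and Theorem \ref{HPN}, by exploiting two homogeneity facts: the Fr\'echet normal cones $\widehat{N}_{\mathcal{R}(r)}(\alpha,\beta)$ and $\widehat{N}_{\mathrm{epi}(T)}((\alpha,\beta),r)$ are genuine cones, and the Hamiltonian $h(x,\cdot)$ is positively homogeneous of degree one, i.e. $h(x,tp)=t\,h(x,p)$ for $t\ge 0$. Combined with the defining identities $\widehat{\partial}T(\alpha,\beta)=\{(\zeta,\theta):((\zeta,\theta),-1)\in\widehat{N}_{\mathrm{epi}(T)}((\alpha,\beta),r)\}$ and $\widehat{\partial}^\infty T(\alpha,\beta)=\{(\zeta,\theta):((\zeta,\theta),0)\in\widehat{N}_{\mathrm{epi}(T)}((\alpha,\beta),r)\}$, these facts let me reduce the general vertical component $\lambda$ to the two normalized values $\lambda=-1$ and $\lambda=0$, which are precisely the subdifferential and singular subdifferential cases treated earlier.

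For part (i), I would start from $(\zeta,\theta)\in\widehat{N}_{\mathcal{R}(r)}(\alpha,\beta)$ and invoke Proposition \ref{eqH} to get $\lambda:=h(\alpha,\zeta)=h(\beta,-\theta)\le 0$, which already yields the asserted equality. If $\lambda=0$, then $(\zeta,\theta)$ lies in the set on the right-hand side of Theorem \ref{HPN}, so $(\zeta,\theta)\in\widehat{\partial}^\infty T(\alpha,\beta)$, i.e. $((\zeta,\theta),0)\in\widehat{N}_{\mathrm{epi}(T)}((\alpha,\beta),r)$, which is the desired conclusion since $h(\alpha,\zeta)=0$. If $\lambda<0$, I rescale by $1/(-\lambda)>0$: since $\widehat{N}_{\mathcal{R}(r)}(\alpha,\beta)$ is a cone, $(\zeta,\theta)/(-\lambda)\in\widehat{N}_{\mathcal{R}(r)}(\alpha,\beta)$, and by homogeneity $h(\alpha,\zeta/(-\lambda))=h(\beta,-\theta/(-\lambda))=-1$; thus Theorem \ref{PN} gives $(\zeta,\theta)/(-\lambda)\in\widehat{\partial}T(\alpha,\beta)$, that is $((\zeta,\theta)/(-\lambda),-1)\in\widehat{N}_{\mathrm{epi}(T)}((\alpha,\beta),r)$. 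Multiplying this normal by $(-\lambda)>0$ and using again that the epigraphical normal cone is a cone produces $((\zeta,\theta),\lambda)\in\widehat{N}_{\mathrm{epi}(T)}((\alpha,\beta),r)$, which is exactly $((\zeta,\theta),h(\alpha,\zeta))\in\widehat{N}_{\mathrm{epi}(T)}((\alpha,\beta),r)$.

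For part (ii), I would first establish $\lambda\le 0$ by testing the normal inequality against the upward vertical ray $\{((\alpha,\beta),r+t):t>0\}\subset\mathrm{epi}(T)$, which forces $\lambda t\le\varepsilon t$ for small $t$ and arbitrary $\varepsilon>0$, hence $\lambda\le 0$. Then I split into the same two cases. When $\lambda=0$, the vector $(\zeta,\theta)$ is a Fr\'echet singular subgradient by definition, and Theorem \ref{HPN} delivers both $(\zeta,\theta)\in\widehat{N}_{\mathcal{R}(r)}(\alpha,\beta)$ and $h(\alpha,\zeta)=h(\beta,-\theta)=0=\lambda$. When $\lambda<0$, rescaling the epigraphical normal by $1/(-\lambda)$ gives $((\zeta,\theta)/(-\lambda),-1)\in\widehat{N}_{\mathrm{epi}(T)}((\alpha,\beta),r)$, i.e. $(\zeta,\theta)/(-\lambda)\in\widehat{\partial}T(\alpha,\beta)$; Theorem \ref{PN} then yields $(\zeta,\theta)/(-\lambda)\in\widehat{N}_{\mathcal{R}(r)}(\alpha,\beta)$ with $h(\alpha,\zeta/(-\lambda))=h(\beta,-\theta/(-\lambda))=-1$, and rescaling back by $(-\lambda)$ (cone property together with homogeneity of $h$) gives $(\zeta,\theta)\in\widehat{N}_{\mathcal{R}(r)}(\alpha,\beta)$ and $h(\alpha,\zeta)=h(\beta,-\theta)=\lambda$.

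I expect no serious obstacle, since the heavy analytic work, namely the trajectory and projection estimates relating $\widehat{N}_{\mathcal{R}(r)}$ to the Hamiltonian conditions, has already been carried out in Proposition \ref{eqH}, Theorem \ref{PN}, and Theorem \ref{HPN}. The only points requiring care are that the rescaling is legitimate precisely because both normal cones are cones and $h(x,\cdot)$ is positively homogeneous, and the separate bookkeeping of the degenerate case $\lambda=0$, where rescaling is unavailable and one must appeal directly to the singular subdifferential representation. I would also verify the trivial subcase $(\zeta,\theta)=(0,0)$, which falls under $\lambda=0$ and holds automatically because the zero vector belongs to every Fr\'echet normal cone.
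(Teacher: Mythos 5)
Your proposal is correct and follows essentially the same route as the paper's own proof: Proposition \ref{eqH} for the Hamiltonian equality and sign, then the case split $\lambda=0$ (handled via the singular subdifferential and Theorem \ref{HPN}) versus $\lambda<0$ (rescaling to the normalized value $-1$, applying Theorem \ref{PN}, and rescaling back using the cone property and positive homogeneity of $h(x,\cdot)$). The only difference is cosmetic: you spell out the vertical-ray argument for $\lambda\le 0$, which the paper dispatches with ``by the nature of an epigraph.''
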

\begin{proof}
(i) Since $(\zeta,\theta) \in \widehat{N}_{\mathcal{R}(r)}(\alpha,\beta)$, it follows from Proposition \ref{eqH} that $h(\alpha,\zeta) = h(\beta,-\theta) \le 0$. We have two cases.\\
\textbf{Case 1}. $h(\alpha,\zeta) = h(\beta,-\theta) = 0$. Then by Theorem \ref{HPN}, $(\zeta,\theta) \in \widehat{\partial}^\infty T(\alpha,\beta)$. Equivalently, $((\zeta,\theta),0) \in \widehat{N}_{\mathrm{epi}(T)}((\alpha,\beta),r)$. Thus $((\zeta,\theta),h(\alpha,\zeta)) \in \widehat{N}_{\mathrm{epi}(T)}((\alpha,\beta),r)$.
\\\textbf{Case 2}. $\lambda: =h(\alpha,\zeta) = h(\beta,-\theta) <0$. We set
$$\zeta_1 = -\frac{\zeta}{\lambda},\,\,\text{and}\,\, \theta_1 = - \frac{\theta}{\lambda}.$$
Then $(\zeta_1,\theta_1) \in \widehat{N}_{\mathcal{R}(r)}(\alpha,\beta)$ and $h(\alpha,\zeta_1) = h(\beta,-\theta_1) = -1$. Then by Theorem \ref{PN}, we have $(\zeta_1,\theta_1) \in \widehat{\partial} T(\alpha,\beta)$. Equivalently, $((\zeta_1,\theta_1),-1) \in \widehat{N}_{\mathrm{epi}(T)}((\alpha,\beta),r)$. Therefore
$$  ((\zeta,\theta),h(\alpha,\zeta)) = -\lambda((\zeta_1,\theta_1),-1) \in  \widehat{N}_{\mathrm{epi}(T)}((\alpha,\beta),r).$$
(ii) By the nature of an epigraph, it follows from $((\zeta,\theta),\lambda) \in \widehat{N}_{\mathrm{epi}(T)}((\alpha,\beta),r)$ that $\lambda \le 0$. We also have two possible cases.\\
\textbf{Case 1}. $\lambda = 0$. Then $(\zeta,\theta) \in \widehat{\partial}^\infty T(\alpha,\beta)$. By Theorem \ref{HPN}, we have
$$(\zeta,\theta) \in \widehat{N}_{\mathcal{R}(r)}(\alpha,\beta),\,\,\text{and}\,\,h(\alpha,\zeta) = h(\beta,-\theta) = 0 = \lambda.$$
\textbf{Case 2}. $\lambda <0$. Set
$$\zeta_1 = -\frac{\zeta}{\lambda},\,\,\text{and}\,\, \theta_1 = - \frac{\theta}{\lambda}.$$
Then $((\zeta_1,\theta_1),-1)\in \widehat{N}_{\mathrm{epi}(T)}((\alpha,\beta),r)$. This implies $(\zeta_1,\theta_1) \in \widehat{\partial} T(\alpha,\beta)$. By Theorem \ref{PN},
$$(\zeta_1,\theta_1) \in \widehat{N}_{\mathcal{R}(r)}(\alpha,\beta),\,\,\text{and}\,\, h(\alpha,\zeta_1) = h(\beta,-\theta_1) = -1.$$
Thus $(\zeta,\theta) = -\lambda (\zeta_1,\theta_1) \in \widehat{N}_{\mathcal{R}(r)}(\alpha,\beta)$ and $h(\alpha,\zeta) = h(\beta,-\theta) = \lambda$.
\end{proof}
The result in Theorem \ref{RE} can be stated in the following way
\begin{Theorem}\label{RE1}
Let $(\alpha,\beta) \in \mathcal{R}$ be such that $0<r:=T(\alpha,\beta)$. We have $(\zeta,\theta)\in \widehat{N}_{\mathcal{R}(r)}(\alpha,\beta)$  if and only if
$$((\zeta,\theta),h(\alpha,\zeta)) =  ((\zeta,\theta),h(\beta, -\theta))  \in \widehat{N}_{\mathrm{epi}(T)}((\alpha,\beta),r).$$
\end{Theorem}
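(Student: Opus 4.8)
The plan is to obtain Theorem \ref{RE1} as a direct repackaging of Theorem \ref{RE}, since the biconditional here merely bundles the two implications (i) and (ii) into a single equivalence. No new analytic work is needed; the content is entirely extracted from the two parts of the previous theorem, and the task reduces to unwinding how the chained display encodes the two separate pieces of information (a scalar equality and a normal-cone membership).

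For the forward implication I would begin with $(\zeta,\theta) \in \widehat{N}_{\mathcal{R}(r)}(\alpha,\beta)$ and invoke Theorem \ref{RE}(i). This yields simultaneously the equality $h(\alpha,\zeta) = h(\beta,-\theta)$ and the membership $((\zeta,\theta),h(\alpha,\zeta)) \in \widehat{N}_{\mathrm{epi}(T)}((\alpha,\beta),r)$. The equality is precisely what forces the two triples $((\zeta,\theta),h(\alpha,\zeta))$ and $((\zeta,\theta),h(\beta,-\theta))$ to coincide, so concatenating these two facts produces exactly the displayed chain.

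For the reverse implication I would read the hypothesis as two assertions: the equation of the two triples forces $h(\alpha,\zeta) = h(\beta,-\theta)$, and I set $\lambda$ to be this common value; and the membership statement then reads $((\zeta,\theta),\lambda) \in \widehat{N}_{\mathrm{epi}(T)}((\alpha,\beta),r)$. Applying Theorem \ref{RE}(ii) with this $\lambda$ returns $(\zeta,\theta) \in \widehat{N}_{\mathcal{R}(r)}(\alpha,\beta)$, and its conclusion $h(\alpha,\zeta) = h(\beta,-\theta) = \lambda$ is consistent with what we used, closing the equivalence.

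There is no genuine obstacle in this argument; the only point demanding slight care is the bookkeeping in the reverse direction, namely recognizing that the single chained display carries both the scalar equality $h(\alpha,\zeta) = h(\beta,-\theta)$ and the normal-cone membership of their common triple, so that the hypothesis of Theorem \ref{RE}(ii) is met with the choice $\lambda := h(\alpha,\zeta)$.
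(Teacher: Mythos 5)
Your proposal is correct and matches the paper's intent exactly: the paper states Theorem \ref{RE1} without a separate proof, treating it as an immediate restatement of Theorem \ref{RE}, and your argument is precisely that unwinding --- part (i) gives the forward implication (the equality $h(\alpha,\zeta)=h(\beta,-\theta)$ making the two triples coincide), and part (ii) applied with $\lambda:=h(\alpha,\zeta)$ gives the reverse one. Nothing is missing.
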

Using Theorem \ref{RE1}, one can easily prove the following Proposition
\begin{Proposition}\label{ZN}
Let $(\alpha,\beta) \in \mathcal{R}$ with $0<T(\alpha, \beta)$. One has 
$$\widehat{N}_{\mathcal{R}(T(\alpha,\beta))}(\alpha,\beta)  = \{(0,0)\}\,\,\text{if and only if}\,\, 
\widehat{N}_{\mathrm{epi}(T)}((\alpha,\beta),T(\alpha,\beta)) = \{((0,0),0)\}.$$
\end{Proposition}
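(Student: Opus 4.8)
The plan is to deduce the equivalence directly from Theorem \ref{RE1} (equivalently, from Theorem \ref{RE}), which already sets up the precise correspondence between the two normal cones via the assignment $(\zeta,\theta)\mapsto((\zeta,\theta),h(\alpha,\zeta))$. Throughout I write $r:=T(\alpha,\beta)$. I would first record the trivial memberships that hold for every such $(\alpha,\beta)$: the zero vector $(0,0)$ always lies in $\widehat{N}_{\mathcal{R}(r)}(\alpha,\beta)$ and $((0,0),0)$ always lies in $\widehat{N}_{\mathrm{epi}(T)}((\alpha,\beta),r)$. Hence the real content of the proposition is that one cone collapses to its zero element exactly when the other does, and no separate verification of the inclusions $\supseteq$ is needed.

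For the forward implication, I would assume $\widehat{N}_{\mathcal{R}(r)}(\alpha,\beta)=\{(0,0)\}$ and take an arbitrary element $((\zeta,\theta),\lambda)\in\widehat{N}_{\mathrm{epi}(T)}((\alpha,\beta),r)$. Theorem \ref{RE}(ii) then yields $(\zeta,\theta)\in\widehat{N}_{\mathcal{R}(r)}(\alpha,\beta)$ together with $\lambda=h(\alpha,\zeta)$, so the hypothesis forces $(\zeta,\theta)=(0,0)$. The only remaining point is to pin down the scalar coordinate, which follows from $\lambda=h(\alpha,0)=\min_{v\in F(\alpha)}\langle v,0\rangle=0$. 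Thus $((\zeta,\theta),\lambda)=((0,0),0)$, and the epigraph normal cone reduces to its zero element.

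For the reverse implication, I would assume $\widehat{N}_{\mathrm{epi}(T)}((\alpha,\beta),r)=\{((0,0),0)\}$ and let $(\zeta,\theta)\in\widehat{N}_{\mathcal{R}(r)}(\alpha,\beta)$ be arbitrary. Applying Theorem \ref{RE1} gives $((\zeta,\theta),h(\alpha,\zeta))\in\widehat{N}_{\mathrm{epi}(T)}((\alpha,\beta),r)=\{((0,0),0)\}$, which immediately forces $(\zeta,\theta)=(0,0)$. Hence $\widehat{N}_{\mathcal{R}(r)}(\alpha,\beta)=\{(0,0)\}$, completing the equivalence.

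Since each direction is a one-line application of the established correspondence, there is no genuine obstacle in this argument; the proposition is essentially a corollary of Theorem \ref{RE1}. The single item requiring care is to not overlook the scalar last coordinate of the epigraph cone in the forward direction, which is controlled precisely by the elementary identity $h(\alpha,0)=h(\beta,0)=0$.
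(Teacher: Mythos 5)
Your proof is correct and takes exactly the route the paper intends: the paper offers no written proof of Proposition \ref{ZN}, stating only that it follows easily from Theorem \ref{RE1}, and your argument supplies precisely those details via Theorem \ref{RE}(ii) in one direction and Theorem \ref{RE1} in the other. The one point needing care, which you handle correctly, is pinning down the scalar coordinate in the forward direction through the identity $h(\alpha,0)=\min_{v\in F(\alpha)}\langle v,0\rangle=0$.
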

The following result is a special feature of the bilateral minimal time function.
\begin{Theorem}
\label{dim}
Let $(\alpha,\beta) \in \mathcal{R}$ with $\alpha \ne \beta$. We have
\begin{equation}
\label{edim}
\dim \widehat{N}_{\mathcal{R}(T(\alpha,\beta))}(\alpha,\beta) = \dim \widehat{N}_{\mathrm{epi}(T)}((\alpha,\beta),T(\alpha,\beta)).
\end{equation}
\end{Theorem}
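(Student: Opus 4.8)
The plan is to reduce the equality of dimensions to the observation that the projection forgetting the last coordinate restricts to a linear isomorphism between the linear spans of the two normal cones. First I would record that, since $\alpha\neq\beta$, one has $0<r:=T(\alpha,\beta)<\infty$: indeed $(\alpha,\beta)\in\mathcal{R}$ gives $r<\infty$, while any trajectory steering $\alpha$ to $\beta$ in time $t$ satisfies $\|\beta-\alpha\|\le Mt$, so $r=0$ would force $\beta=\alpha$. Thus the hypotheses of Theorem \ref{RE} are in force. Write $N_1:=\widehat{N}_{\mathcal{R}(r)}(\alpha,\beta)\subset\R^n\times\R^n$ and $N_2:=\widehat{N}_{\mathrm{epi}(T)}((\alpha,\beta),r)\subset\R^n\times\R^n\times\R$; both are closed convex cones, so each coincides with its own difference set, in particular $\mathrm{span}(N_2)=N_2-N_2$. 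Let $\Psi\colon\R^n\times\R^n\times\R\to\R^n\times\R^n$ be the linear projection $\Psi((\zeta,\theta),\lambda)=(\zeta,\theta)$.

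Next I would use Theorem \ref{RE} to show that $\Psi$ restricts to a bijection from $N_2$ onto $N_1$. Surjectivity is part (i): each $(\zeta,\theta)\in N_1$ is the image of $((\zeta,\theta),h(\alpha,\zeta))\in N_2$. Injectivity is the crucial point and comes from part (ii): every element of $N_2$ has the form $((\zeta,\theta),\lambda)$ with $\lambda=h(\alpha,\zeta)$, so its last coordinate is already determined by its $\Psi$-image, whence $\Psi$ separates points of $N_2$. In particular $N_2$ contains no nonzero vertical vector $((0,0),\lambda)$, which is precisely the feature that fails for a general function at a minimizer and which will force the two cones to share their dimension.

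I would then promote the injectivity of $\Psi|_{N_2}$ to injectivity of $\Psi$ on $\mathrm{span}(N_2)$. If $x\in\mathrm{span}(N_2)=N_2-N_2$ satisfies $\Psi(x)=0$, write $x=P-Q$ with $P,Q\in N_2$; then $\Psi(P)=\Psi(Q)$, and injectivity on $N_2$ yields $P=Q$, so $x=0$. Since $\Psi$ is linear and onto $N_1$, it also maps $\mathrm{span}(N_2)$ onto $\mathrm{span}(N_1)$, so $\Psi$ is a linear isomorphism $\mathrm{span}(N_2)\to\mathrm{span}(N_1)$ carrying the cone $N_2$ onto the cone $N_1$.

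Finally I would transfer generators. Suppose $v_1,\dots,v_\kappa\in N_1$ are linearly independent and generate $N_1$ by nonnegative combinations, so that $\dim\widehat{N}_{\mathcal{R}(r)}(\alpha,\beta)=\kappa$, and set $u_i:=(\Psi|_{N_2})^{-1}(v_i)\in N_2$. The $u_i$ are linearly independent, since applying $\Psi$ to a vanishing combination of them returns a vanishing combination of the $v_i$. They generate $N_2$: for $u\in N_2$ one has $\Psi(u)=\sum_i\lambda_i v_i$ with $\lambda_i\ge0$, so $u-\sum_i\lambda_i u_i\in\mathrm{span}(N_2)$ lies in $\ker\Psi$ and is therefore $0$. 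Hence $\dim\widehat{N}_{\mathrm{epi}(T)}((\alpha,\beta),r)=\kappa$, and by symmetry the dimension of either cone is defined exactly when the other is, with equal value, which is \eqref{edim}. The one delicate step is the passage to the span in the third paragraph; everything there rests on Theorem \ref{RE}(ii), that is, on the vertical coordinate of an epigraph normal being pinned to $h(\alpha,\zeta)$, so that no spurious vertical direction can inflate the dimension of $N_2$.
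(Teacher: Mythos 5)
Your proposal is correct and takes essentially the same route as the paper's proof: both directions rest on Theorem \ref{RE}, and your crucial step --- injectivity of the projection $\Psi$ on $\mathrm{span}(N_2)=N_2-N_2$, obtained by writing $x=P-Q$ with $P,Q\in N_2$ and invoking injectivity on the cone itself --- is exactly the paper's device of splitting a vanishing combination $\sum_i a_i(\zeta_i,\theta_i)=0$ into its nonnegative-coefficient part $\mathcal{I}$ and its negative-coefficient part $\mathcal{J}$, so that both partial sums lie in the convex cone $\widehat{N}_{\mathrm{epi}(T)}((\alpha,\beta),r)$ and Theorem \ref{RE}(ii) pins their last coordinates to equal Hamiltonian values. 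The only blemish is a wording slip that does not affect the argument: a closed convex cone does not ``coincide with its own difference set'' (that would force it to be a linear subspace); what you actually use, and what is true, is that $\mathrm{span}(N_2)=N_2-N_2$ because the Fr\'echet normal cone is convex.
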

\begin{proof} The argument is close to the one in \cite{LN} where an analogous result is proved for proximal normal cones in the context of the unilateral minimal time function.
By Proposition \ref{ZN}, it is sufficient to consider the case when both $\widehat{N}_{\mathcal{R}(T(\alpha,\beta))}(\alpha,\beta)$ and $\widehat{N}_{\mathrm{epi}(T)}((\alpha,\beta),T(\alpha,\beta))$ are nontrivial. \\
Set $r = T(\alpha,\beta)$. Let $\kappa = \dim \widehat{N}_{\mathcal{R}(r)}(\alpha,\beta)$ and $\ell = \dim \widehat{N}_{\mathrm{epi}(T)}((\alpha,\beta),r)$. We first assume that $(\zeta_1,\theta_1),\cdots, (\zeta_\kappa,\theta_\kappa) \in \widehat{N}_{\mathcal{R}(r)}(\alpha,\beta)$ are linearly independent. By Theorem
\ref{RE1}, we have $$((\zeta_1,\theta_1),h(\alpha,\zeta_1)), \cdots, ((\zeta_\kappa,\theta_\kappa),h(\alpha,\zeta_\kappa)) \in \widehat{N}_{\mathrm{epi}(T)}((\alpha,\beta),r).$$ Observe that $((\zeta_1,\theta_1),h(\alpha,\zeta_1)), \cdots, ((\zeta_\kappa,\theta_\kappa),h(\alpha,\zeta_\kappa))$ are linearly independent. Thus $\kappa \le \ell$.\\
Now assume that $((\zeta_1,\theta_1),\lambda_1),\cdots, (\zeta_\ell,\theta_\ell),\lambda_\ell) \in \widehat{N}_{\mathrm{epi}(T)}((\alpha,\beta),r)$ are linearly independent. It follows from Theorem \ref{RE} that $(\zeta_i,\theta_i) \in \widehat{N}_{\mathcal{R}(r)}(\alpha,\beta)$ and $h(\alpha,\zeta_i) = h(\beta,-\theta_i) = \lambda_i$ for all $i = 1,\cdots,\ell$. Observe that $(\zeta_i,\theta_i)\ne (0,0)$ for all $i = 1,\cdots,\ell$. Indeed, if $(\zeta_i,\theta_i) = (0,0)$ for some $i$, then $\lambda_i =0$. This contradicts to the linear independence of $((\zeta_1,\theta_1),\lambda_1),\cdots, (\zeta_\ell,\theta_\ell),\lambda_\ell)$. We are going to show that $(\zeta_1,\theta_1),\cdots, (\zeta_\ell,\theta_\ell)$ are linearly independent. Consider
\begin{equation}
\label{E17}
\sum_{i=1}^\ell a_i(\zeta_i,\theta_i) = 0,
\end{equation}
for some $a_1,\cdots,a_\ell \in \R$. We claim that $a_1 = \cdots = a_\ell = 0$. Indeed, set 
$$\mathcal{I} = \{i:  a_i \ge 0, 1\le i \le \ell\},$$
and
$$\mathcal{J} = \{i:  a_i <0, 1\le i \le \ell\}.$$
Then $\mathcal{I} \cup \mathcal{J} = \{1,\cdots,\ell\}$.\\
Assume  $\mathcal{I}\ne \emptyset$ and $\mathcal{J} \ne \emptyset$. From (\ref{E17}), we have
$$\sum_{i\in\mathcal{I}} a_i(\zeta_i,\theta_i) = -\sum_{j\in\mathcal{J}}a_j(\zeta_j,\theta_j),$$
i.e.,
\begin{equation}
\label{E18}
\sum_{i\in\mathcal{I}}a_i\zeta_i = -\sum_{j\in\mathcal{J}}a_j\zeta_j\,\,\text{and}\,\, \sum_{i\in\mathcal{I}}a_i\theta_i = -\sum_{j\in\mathcal{J}}a_j\theta_j
\end{equation}
Since $((\zeta_i,\theta_i),\lambda_i) \in \widehat{N}_{\mathrm{epi}(T)}((\alpha,\beta),r),$ and  $a_i \ge 0$ for all $i\in \mathcal{I}$, we have
$$\left(\left(\sum_{i\in \mathcal{I}}a_i\zeta_i,\sum_{i\in \mathcal{I}}a_i\theta_i\right),\sum_{i\in \mathcal{I}}a_i\lambda_i\right) = \sum_{i\in \mathcal{I}}a_i((\zeta_i,\theta_i),\lambda_i) \in \widehat{N}_{\mathrm{epi}(T)}((\alpha,\beta),r).$$ 
Then by Theorem \ref{RE}, 
\begin{equation}
\label{E19}
h\left(\alpha, \sum_{i\in \mathcal{I}}a_i\zeta_i\right) = \sum_{i\in \mathcal{I}}a_i\lambda_i.
\end{equation}
Similarly, since $((\zeta_j,\theta_j),\lambda_j) \in \widehat{N}_{\mathrm{epi}(T)}((\alpha,\beta),r)$, and $ -a_j \ge 0$ for all $j\in \mathcal{J}$, we have
$$\left(\left(-\sum_{j\in \mathcal{J}}a_j\zeta_j,-\sum_{j\in \mathcal{J}}a_j\theta_j\right),-\sum_{j\in \mathcal{J}}a_j\lambda_j\right) = -\sum_{j\in \mathcal{J}}a_j((\zeta_j,\theta_j),\lambda_j) \in \widehat{N}_{\mathrm{epi}(T)}((\alpha,\beta),r).$$ 
Then
\begin{equation}
\label{E20}
h\left(\alpha, -\sum_{j\in \mathcal{J}}a_j\zeta_j\right) = -\sum_{j\in \mathcal{J}}a_j\lambda_j.
\end{equation}
It follows from (\ref{E18}) - (\ref{E20}) that
$$    \sum_{i\in \mathcal{I}}a_i\lambda_i  =-\sum_{j\in \mathcal{J}}a_j\lambda_j,$$
i.e.,
\begin{equation}
\label{E21}
\sum_{i=1}^\ell a_i\lambda_i = 0.
\end{equation}
We have from (\ref{E17}) and (\ref{E21}) that
$$\sum_{i=1}^\ell a_i((\zeta_i,\theta_i),\lambda_i) = ((0,0),0).$$
Since $((\zeta_1,\theta_1),\lambda_1),\cdots, ((\zeta_\ell,\theta_\ell),\lambda_\ell)$ are linearly independent, the latter equality implies that $a_1 = \cdots = a_\ell=0$. This contradicts to $\mathcal{J} \ne \emptyset$.\\ 
Similarly, it cannot happen that $\mathcal{I} = \emptyset$ and $\mathcal{J}\ne \emptyset$. In the case, $\mathcal{I} \ne \emptyset$ and $\mathcal{J} = \emptyset$, we get $a_1 = \cdots = a_\ell = 0$. This means that $(\zeta_1,\theta_1),\cdots, (\zeta_\ell,\theta_\ell)$ are linearly independent. Thus $\ell \le \kappa$. This ends the proof.
\end{proof}
\begin{Remark}
It is worth remarking that all results in this paper still hold true if we replace  Fr\'echet normal cones, Fr\'echet (singular) subdifferentials by proximal normal cones, proximal (singular) subdifferentials, respectively.
\end{Remark} 
\textbf{Acknowledgments}. The author would like to thank the anonymous referee  for valuable remarks and suggestions  which improved the quality of the paper.

The paper was supported by funds allocated to the implementation of the international co-funded project in the years 2014-2018, 3038/7.PR/2014/2, and by the EU grant PCOFUND-GA-2012-600415.

\end{document}